\def\lambdabar{\bar{\lambda}}
\def\P{\mathbb{P}}
\def\Q{\mathbb{Q}}
\def\R{\mathbb{R}}
\def\Z{\mathbb{Z}}
\def\F{\mathbb{F}}
\def\tensor{\otimes}
\let\iso\cong
\let\cong\equiv
\def\curlyO{\EuScript{O}}
\def\curlyV{\EuScript{V}}
\def\ybar{\bar{y}}
\def\Isom{\mathop{\rm Isom}\nolimits}
\def\PSL{{\rm PSL}}
\def\PGL{{\rm PGL}}
\def\SL{{\rm SL}}
\def\GL{{\rm GL}}
\def\PO{{\rm PO}}
\def\PU{{\rm PU}}
\def\a{\alpha}
\def\c{\gamma}
\def\shat{\hat{s}}
\def\sset{\subseteq}
\def\Zhalf{\Z[\frac{1}{2}]}
\def\Lhalf{L[\frac{1}{2}]}
\def\Mhalf{M[\frac{1}{2}]}
\def\curlyOhalf{\curlyO[\frac{1}{2}]}
\def\G{\Gamma}
\def\PG{P\G}
\def\GMu{\G_{\!M}}
\def\GAK{\G_{\!L}}
\def\PGMu{P\GMu}
\def\PGAK{P\GAK}
\def\B{\mathcal{B}}
\def\generate#1{\langle #1\rangle}
\def\spanof#1{\langle #1\rangle}
\def\hip#1#2{\langle #1|#2\rangle}
\def\bighip#1#2{\bigl\langle #1\bigm|#2\bigr\rangle}
\def\set#1#2{\{#1\,|\,#2\}}
\def\biggset#1#2{\biggl\{#1\,\biggm|\,#2\biggr\}}
\newtheorem{theorem}{Theorem}
\newtheorem{lemma}[theorem]{Lemma}
\newtheorem{corollary}[theorem]{Corollary}
\theoremstyle{remark}
\newtheorem*{remarks}{Remarks}
\numberwithin{theorem}{section}
\numberwithin{equation}{section}
\numberwithin{figure}{section}
\begin{document}

\title{The densest lattices in $\PGL_3(\Q_2)$}
\author{Daniel Allcock}
\address{Department of Mathematics\\University of Texas, Austin}
\email{allcock@math.utexas.edu}
\urladdr{http://www.math.utexas.edu/\textasciitilde allcock}
\author{Fumiharu Kato}
\address{Department of Mathematics\\Kumamoto University}
%Kurokami Kumamoto 860-8555, Japan
\email{kato@sci.kumamoto-u.ac.jp}
\urladdr{http://www.sci.kumamoto-u.ac.jp/\textasciitilde kato/index-e.html}
\subjclass[2000]{22E40}
\keywords{Densest lattice, arithmetic group, fake projective plane,
  non-Archimedean uniformization}
\date{October 25, 2011}
%\date{August 26, 2011}
%\date{May 10, 2011}
%\date{April 30, 2011}
%\date{April 15, 2011}
%\date{April 6, 2011}
%\date{April 5, 2011}
%\date{April 4, 2011}
% earliest notes dated 7 March 2011

\begin{abstract}
We find the smallest possible covolume for lattices in $\PGL_3(\Q_2)$,
show that there are exactly two lattices with this covolume, and
describe them explicitly. They are commensurable, and one of them
appeared in Mumford's construction of his fake projective plane.  We
also discuss a new $2$-adic uniformization of another fake projective
plane.
\end{abstract}

\maketitle

{\it Correction added September 24, 2016:\/} The proof of lemma~\ref{lem-passage-between-PGL3-and-SL3}
contains an error; even the statement of the lemma is wrong. But this
does not affect the rest of the paper.  Here are the details.  It is
not true that ``central extensions of finite groups by torsion-free
groups are trivial''; for example, $\beta_M\in\GL_3\Q_2$ from
section~\ref{sec-Gamma-L-and-Gamma-M} generates a subgroup $\Z/3$ of $\PGL_3\Q_2$, and this
$\Z/3$ has no lift to $\SL_3\Q_2$.  What is true is that
$\SL_3\Q_2\to\PSL_3\Q_2$ is an isomorphism whose image has index~$3$
in $\PGL_3\Q_2$.  This follows from the structure of
$\Q_2^*\iso\{\pm1\}\times\Z\times\Z_2$.  Furthermore, if an element of
$\GL_3\Q_2$ represents an element of $\PGL_3\Q_2$ outside
$\PSL_3\Q_2$, then its determinant is not a perfect cube in~$\Q_2$.
By the lattice interpretation of the vertices of Bruhat-Tits
building, it cannot fix any vertex.  This establishes an analogue of
lemma~\ref{lem-passage-between-PGL3-and-SL3} that is sufficient for all applications in the paper:
{\it a finite subgroup of $\PGL_3\Q_2$ is the isomorphic image of a unique
  subgroup of $\SL_3\Q_2$, if it fixes some vertex of
  the Bruhat-Tits building.}

Here is an another way to fix lemma~\ref{lem-passage-between-PGL3-and-SL3}; we omit the proof since it
is not needed to fix the paper. {\it Suppose $G$ is a
  finite subgroup of $\PGL_3\Q_2$.  Then either (i) it is the
  isomorphic image of a unique subgroup of $\SL_3\Q_2$, or (ii)
  $G\iso\Z/3$, or (iii) $G$ is isomorphic to the alternating group
  $A_4$ and conjugate to the group generated by projectivizations of 
  $$
  \begin{pmatrix}
    -&&\\&-&\\&&+
  \end{pmatrix}
  ,
  \begin{pmatrix}
    +&&\\&-&\\&&-
  \end{pmatrix}
  ,\hbox{ \rm and }
  \begin{pmatrix}
    &&\sigma\\1&&\\&1&
  \end{pmatrix}
  $$
  where $\sigma\in\Q_2^*$ is not a perfect cube.}

We now return you to the original paper.

\bigskip
The most famous lattice in the projective group $\PGL_3(\Q_2)$ over
the $2$-adic rational numbers $\Q_2$ is the one Mumford used to
construct his fake projective plane \cite{Mumford}.  Namely, he found
an arithmetic group $\PG_1$ (we call it $\PGMu$) containing a
torsion-free subgroup of index~$21$, such that the algebraic surface
associated to it by the theory of $p$-adic uniformization
\cite{Mustafin-1,Kurihara} is a fake projective plane.  The full
classification of fake projective planes has been obtained recently
\cite{FPP}.  Fake projective planes play an important and interesting
role, not only in the theory of algebraic surfaces and non-archimedean
geometry, but also in the theory of algebraic cycles, especially in
connection with the Bloch conjecture, cf. \cite{Barlow}.

The second author and his collaborators have developed a diagrammatic
calculus \cite{CKK,Kato-calculus} for working with algebraic curves
(including orbifolds) arising from $p$-adic uniformization using
lattices in $\PGL_2$ over a nonarchimedean local field.  It allows one
to read off properties of the curves from the quotient of the
Bruhat-Tits tree and to construct lattices with various properties, or
prove they don't exist.  We hope to develop a higher-dimensional
analogue of this theory, although only glimpses of it are now visible.
Pursuing these glimpses suggested the existence of another lattice
$\PGAK$ with the same covolume as Mumford's, and we were able to prove
that it really does exist.  We show that $\PGAK$ and $\PGMu$ have the
smallest possible covolume in $\PGL_3(\Q_2)$, are the only lattices
with this covolume, and meet each other in a common index~$8$
subgroup.  We also give explicit generators and a geometric
description of their actions on the Bruhat-Tits building.

We discovered a new $2$-adically uniformized fake projective plane by
considering the subgroup of $\PGAK$ corresponding to a Sylow
$2$-subgroup of $\PGL_2(\F_7)$ under a natural surjection
$\PGAK\to\PGL_2(\F_7)$.  More precisely, let $\curlyV$ be the
$\Z_2$-scheme associated to the (torsion-free) kernel $K$ of
$\PGAK\to\PGL_2(\F_7)$ by non-Archimedean uniformization.  Unlike in
Mumford's case, the Sylow $2$-subgroup of $\PGAK/K$ acts quite badly
on $\curlyV$: there are components of the central fiber $\curlyV_0$
with nontrivial pointwise stabilizer.  {\it But it acts freely on the
  generic fiber}, with quotient a fake projective plane defined over
$\Q_2$.

In particular, $\PGAK$ led directly to a way to use discrete
groups with torsion to construct interesting varieties that are
genuinely smooth, not just smooth as orbifolds.  Among other things,
this opens up the possibility of $p$-adic uniformizations of fake
projective planes with $p>2$.  This is unexpected because Mumford's
calculations in \cite{Mumford} show that such uniformization is
possible only if the  residue field is $\F_2$.  Since Mumford only
considered torsion-free groups, our results are compatible with his.

Except for the presence of torsion, our construction parallels
Mumford's.  The algebraic surfaces associated to $\PGAK$ and $\PGMu$
are also surprisingly similar, for example they have the same
configuration of singularities.   Ishida \cite{Ishida} has described
the surface in considerable detail in the $\PGMu$ case.  We intend to
carry out the corresponding analysis for $\PGAK$, as well as the
treatment of the fake projective plane, in a future paper.

Finding densest-possible lattices in Lie groups has a long history,
beginning with Siegel's treatment \cite{Siegel}
% [thm.~5]
of the unique densest lattice in $\PSL_2(\R)$.  Lubotzky
\cite{Lubotzky} found the minimal covolume in $\SL_2(\Q_p)$ and
$\SL_2(\F_q((t)))$, and some lattices realizing it.  With Weigel
\cite{Lubotzky-Weigel} he obtained the complete classification of
(isomorphism classes of) densest lattices in $\SL_2$ over any finite
extension of $\Q_p$.  Golsefidy \cite{Golsefidy} identified the unique
densest lattice in $G\bigl(\F_q((t))\bigr)$, where $G$ is any simply connected Chevelley
group of type $E_6$ or classical type${}\neq A_1$ and $q$ is
neither~$5$ nor a power of $2$ or~$3$.  A generalization of Lubotzky's
result in another direction regards $\SL_2\bigl(\F_q((t))\bigr)$ as
the loop group of $\SL_2(\F_q)$.  This is the simplest infinite-dimensional
Kac-Moody group, having type $\tilde{A}_1$ over $\F_q$.  The
next-simplest Kac-Moody groups correspond to symmetric rank~$2$ Cartan matrices of hyperbolic
type.  The minimal covolumes of lattices in these Kac-Moody groups,
and some lattices realizing them, have been found by Capdeboscq and
Thomas \cite{Capdeboscq-Thomas}.

Lattices in $\PO(n,1)$ and $\PU(n,1)$ present special challenges
because not all of them are arithmetic.  Meyerhoff
\cite{Meyerhoff-noncompact-H3-announcement,Meyerhoff-noncompact-H3-proof}
identified the unique densest non-cocompact lattice in the identity component
$\PO^\circ(3,1)$ of $\PO(3,1)$, and with Gabai and Milley he identified the
densest cocompact lattice
\cite{Gabai-Meyerhoff-Milley-closed-H3-outline,Gabai-Meyerhoff-Milley-closed-H3-details}.
%Adams \cite{Adams} identified the unique densest cocompact lattice in
%$\PO(3,1)$.  
Hild and Kellerhals \cite{Hild-Kellerhals} found the
unique densest non-cocompact lattice in $\PO(4,1)$, and Hild
\cite{Hild} extended this to $\PO(n,1)$ for $n\leq9$.  Among
arithmetic lattices, Belolipetsky
\cite{Belolipetsky-SO(even-1),Belolipetsky-SO(even-1)-addendum} found
the unique densest lattice in $\PO^\circ(n,1)$ for  even~$n$,
in both the cocompact and non-cocompact cases.  With Emery
\cite{Belolipetsky-Emery} he extended
this to the case of odd $n\geq5$. 
Stover \cite{Stover} found the two densest non-cocompact arithmetic
lattices in $\PU(2,1)$.

The first author is grateful to the Clay Mathematics Institute, the
Japan Society for the Promotion of Science and Kyoto University for
their support and hospitality.

\section{Finite and discrete subgroups of $\PGL_3(\Q_2)$}
\label{sec-finite-and-discrete-subgroups}

Throughout this section $V$ is a $3$-dimensional vector space over the
$2$-adic rational numbers $\Q_2$. Our goal is to study the finite
subgroups of $\PGL(V)$ and how they constrain the discrete subgroups.

We will write $F_{21}$ for the Frobenius group of order~$21$ (the
unique nonabelian group of this order), $S_n$ for the symmetric group
on $n$ objects, and sometimes $2$ for $\Z/2$ and $2^n$ for $(\Z/2)^n$.
Also, $L_3(2)$ means the simple group $\PSL_3(2)\iso\PSL_2(7)$ of
order~$168$.  It has three conjugacy classes of maximal subgroups
\cite{ATLAS}: the 
stabilizers of points and lines in $\P^2\F_2$,  isomorphic to $S_4$, and 
the Borel
subgroup of $\PSL_2(7)$, isomorphic to $F_{21}$.

The first step in studying the finite subgroups of $\PGL(V)$ is that
passage between $\PGL$ and $\SL$ is free of complications:

\begin{lemma}
\label{lem-passage-between-PGL3-and-SL3}
Every finite subgroup of $\SL(V)$ maps isomorphically to its image in
$\PGL(V)$, and every finite subgroup of $\PGL(V)$ is the image of a
unique subgroup of $\SL(V)$.
\end{lemma}

\begin{proof}
For the first claim, note that the only scalar in $\SL(V)$ is the
identity, so $\SL(V)\to\PGL(V)$ is injective.
Now let $A$ be a complement to $\{\pm1\}$ in
$\Q_2^*\iso\{\pm1\}\times\Z\times\Z_2$, and let $H\sset\GL(V)$ consist of the
transformations with determinants in $A$. 
The center of $H$ is torsion-free.
Since central extensions of
finite groups by torsion-free groups are trivial, a finite subgroup of
$\PGL(V)$ has a unique lift to $H$.  
The determinant of any element of the lift is a finite-order element of $A$, hence equals~$1$.  We have shown that
every finite subgroup of $\PGL(V)$ has a unique lift to $\SL(V)$.
\end{proof}

If $A$ is an integral domain with fraction field $k$ and $W$ a
$k$-vector space, then an $A$-lattice in $W$ means an $A$-submodule
$L$ with $L\tensor_Ak=W$.  We write $\GL(L)$ for the group of
$A$-module automorphisms of $L$, and $\SL(L)$ for its intersection
with $\SL(W)$.  In this paper $A$ will be the ring of integers $\curlyO$ in
$\Q(\sqrt{-7})$, or $\curlyOhalf$, or the $2$-adic integers $\Z_2$.  In
this section it will always be $\Z_2$, and $L$ means a $\Z_2$-lattice
in $V$.  The following lemma is well-known and holds much more
generally, but we only give the case we need.

\begin{lemma}
\label{lem-lattice-is-sum-of-eigenlattices}
Suppose an involution $g\in\GL(L)$ acts trivially on $L/2L$.  Then $L$
is the direct sum of $g$'s eigenlattices (the intersections of $L$
with $g$'s eigenspaces).
\end{lemma}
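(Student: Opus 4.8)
The plan is to exhibit the two spectral projections of $g$ as $\Z_2$-linear self-maps of $L$, which immediately splits $L$. First I would record that $g$ is diagonalizable over $\Q_2$: its minimal polynomial divides $t^2-1=(t-1)(t+1)$, a product of distinct linear factors since $2$ is invertible in $\Q_2$, so $V=V_+\oplus V_-$ where $V_\pm$ are the $(\pm1)$-eigenspaces, and $L_\pm:=L\cap V_\pm$ are the eigenlattices.

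The obstruction to splitting $L$ accordingly is that the usual projections $p_\pm=\frac{1}{2}(1\pm g)$ onto $V_\pm$ involve a division by $2$ and so need not preserve $L$; the entire force of the hypothesis is precisely to remove this obstruction. Since $g$ acts trivially on $L/2L$, the map $g-1$ carries $L$ into $2L$, so $k:=\frac{1}{2}(g-1)$ is a well-defined $\Z_2$-linear endomorphism of $L$. Rewriting the projections in terms of $k$ gives $p_+=1+k$ and $p_-=-k$, both of which manifestly preserve $L$.

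It then remains to assemble the splitting. For any $x\in L$ we have $x=p_+x+p_-x$ with $p_+x\in L\cap V_+=L_+$ and $p_-x\in L\cap V_-=L_-$, so $L=L_++L_-$; the sum is direct because $V_+\cap V_-=0$. Hence $L=L_+\oplus L_-$. I expect no genuine difficulty here: once the hypothesis is translated into the integrality of $k$, every step is formal, and the only point deserving care is checking that $p_\pm$ really are the eigenspace projections, i.e.\ that $g\,p_\pm=\pm p_\pm$ and $p_++p_-=1$, all of which follow at once from $g^2=1$.
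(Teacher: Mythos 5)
Your proposal is correct and is essentially the paper's own argument: the paper also shows the spectral projections $x\mapsto\frac{1}{2}(x\pm gx)$ land in $L$, deducing $2x_\pm\in 2L$ directly from $x\pm gx\cong 0\bmod 2L$, which is the same observation you package as the integrality of $k=\frac{1}{2}(g-1)$. Your write-up is just a mild reformulation, with the diagonalizability remark made explicit.
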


\begin{proof}
Write $x_{\pm}$ for the
projection of any $x\in L$ to $g$'s $\pm1$ eigenspaces.  By hypothesis $x\pm
gx$ is $0$ mod $2L$, which is to say  $2x_{\pm}\in2L$.
So $x_{\pm}\in L$, proving the lemma.
\end{proof}

\begin{lemma}
\label{lem-finite-subgroups-of-GL3Z2}
Suppose  $G$ is a
finite subgroup of $\SL(L)$.  Then either
\begin{enumerate}
\item
\label{item-G-acts-faithfully-modulo-2}
$G$ acts faithfully on $L/2L$, or
\item
\label{item-G-is-S4-acting-nonfaithfully}
$(G,L)\iso(S_4,\Z_2^3)$ where $S_4$ is the determinant~$1$ subgroup of
the group of signed permutations, or
\item
$|G|\leq12$.
\label{item-G-is-at-most-12}
\end{enumerate}
\end{lemma}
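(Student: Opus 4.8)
The plan is to study the kernel $K$ of the reduction homomorphism $G\to\GL(L/2L)\iso\GL_3(\F_2)$. If $K$ is trivial we are in conclusion~(\ref{item-G-acts-faithfully-modulo-2}), so I would assume $K\ne1$ and show that $K$ is elementary abelian of order $2$ or $4$, and that each case forces conclusion~(\ref{item-G-is-S4-acting-nonfaithfully}) or~(\ref{item-G-is-at-most-12}). The first input is that the congruence subgroup $\ker\bigl(\GL(L)\to\GL(L/4L)\bigr)$ is torsion-free: writing a putative nontrivial torsion element as $1+4A$ with $A$ a nonzero integral matrix and comparing $2$-adic valuations on the two sides of $g^n=1$ gives a contradiction. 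Since $g\cong1\pmod{2L}$ implies $g^2\cong1\pmod{4L}$, every nonidentity element of $K$ is an involution, so $K$ is elementary abelian. For each such involution Lemma~\ref{lem-lattice-is-sum-of-eigenlattices} splits $L$ into eigenlattices, and $\det g=1$ forces the $(-1)$-eigenlattice to have even rank, hence rank~$2$ (rank~$0$ would give $g=1$). Decomposing $L$ into the simultaneous character spaces of $K$ and using that $\det$ is trivial on $K$ — so the product of the occurring characters, weighted by rank, is trivial — leaves exactly two possibilities: a rank split $1+2$ with $|K|=2$, or a split into three distinct lines with $|K|=4$.

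If $|K|=2$, its involution $g_0$ is central in $G$, and $G$ preserves the eigenlattice decomposition $L=L_+\oplus L_-$, where $L_+$ has rank~$1$ and $L_-$ has rank~$2$. Reducing mod~$2$, the image $\bar G=G/K$ preserves the induced splitting of $L/2L$ and therefore embeds in $\GL_1(\F_2)\times\GL_2(\F_2)\iso S_3$. Hence $|G|=2|\bar G|\le12$, which is conclusion~(\ref{item-G-is-at-most-12}).

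If $|K|=4$, then $L=L_1\oplus L_2\oplus L_3$ is a sum of three distinct lines indexed by the three nontrivial characters of $K$, and $G$ permutes them since it normalizes $K$. This gives $\bar G=G/K\hookrightarrow S_3$ — the kernel of the permutation action would act diagonally on $L/2L$, hence trivially since $\GL_1(\F_2)=1$ — so $|G|=4|\bar G|\le24$. When $|G|<24$ we get $|G|\le12$ and conclusion~(\ref{item-G-is-at-most-12}). When $|G|=24$ we have $\bar G=S_3$, so $G$ acts by monomial matrices realizing all permutations of the three lines together with the even sign changes of $K$; choosing a generator of each $L_i$ should exhibit $G$ as the determinant~$1$ group of signed permutation matrices and give $(G,L)\iso(S_4,\Z_2^3)$, which is conclusion~(\ref{item-G-is-S4-acting-nonfaithfully}).

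The routine inputs — torsion-freeness of the level-$4$ congruence subgroup and the identifications $\GL_1(\F_2)=1$ and $\GL_2(\F_2)\iso S_3$ — I would dispatch quickly. The step needing genuine care is the last one: promoting the count $|G|=24$ to an identification of the pair $(G,L)$. Here I must check that in a suitable basis every $g\in G$ really is a signed permutation, that is, that the units by which $G$ rescales the lines $L_i$ are roots of unity in $\Z_2^\times$ and hence $\pm1$, and that these scalars can be normalized away consistently across all of $G$ by rescaling the chosen generators. Verifying that this normalization is simultaneously possible, and that the resulting extension of $S_3$ by $K\iso(\Z/2)^2$ with its faithful action is the standard copy of $S_4$, is where the real work lies.
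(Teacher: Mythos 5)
Your skeleton is the paper's proof almost step for step: the kernel $K$ of reduction mod~$2$, its elementary-abelianness, the eigenlattice splittings from Lemma~\ref{lem-lattice-is-sum-of-eigenlattices}, the parity argument from $\det=1$ showing each involution in $K$ negates a rank-$2$ summand, and the dichotomy $|K|=2$ versus $|K|=4$. Your route to elementary-abelianness (torsion-freeness of the level-$4$ congruence subgroup, plus $g\cong1\bmod 2L\Rightarrow g^2\cong1\bmod 4L$) is an equivalent repackaging of the Siegel-style computation the paper uses, and your $|K|=2$ bound via $\bar G\hookrightarrow\GL_1(\F_2)\times\GL_2(\F_2)\iso S_3$ is the paper's ``stabilizer of a point and a line not containing it'' in different words. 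The one real delta is the step you yourself flag as unfinished: promoting $|G|=24$ in the $|K|=4$ case to $(G,L)\iso(S_4,\Z_2^3)$. Your caution here is exactly right, and the gap is genuine as written: monomiality alone does not give signed permutations, since the scalars lie in $\Z_2^\times$, which has non-torsion units. Note that the paper itself compresses this point into the assertion that ``the normalizer of $K$ in $\SL(V)$ is exactly the $S_4$'' --- read literally that normalizer contains the whole determinant-$1$ diagonal torus, so the paper too is tacitly invoking finiteness of $G$ at precisely the spot you identify as the real work.

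The good news is that your sketched normalization closes in a few lines, and you only need to rescale once. Pick generators $e_i$ of the three lines $L_i$, and let $g\in G$ lift the $3$-cycle: $ge_1=ae_2$, $ge_2=be_3$, $ge_3=ce_1$ with $a,b,c\in\Z_2^\times$ (units because $gL_i=L_{i+1}$). Then $g^3=abc\cdot I$ and $\det g=abc$, so $G\subseteq\SL(L)$ gives $abc=1$; replacing $e_2$ by $ae_2$ and $e_3$ by $abe_3$ --- unit rescalings, so each $L_i$ and hence $L$ is preserved --- makes $g$ the standard cyclic permutation, while $K$ stays $\subseteq\{\pm1\}^3$. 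Now let $h\in G$ lift the transposition $(1\,2)$: $he_1=ue_2$, $he_2=ve_1$, $he_3=we_3$. Then $h^2=\diag(uv,uv,w^2)$ is diagonal, hence lies in $G\cap(\hbox{diagonals})=K$ (a diagonal element of $G$ acts trivially on each $L_i/2L_i$, so it is in the kernel of reduction); in particular $w^2=\pm1$, and since $-1$ is not a square in $\Z_2^\times$ we get $w=\pm1$. Finally $hgh^{-1}=g^{-1}\kappa$ for some $\kappa=\diag(\epsilon_1,\epsilon_2,\epsilon_3)\in K$, and evaluating both sides on $e_1,e_2,e_3$ gives $w/v=\epsilon_1$, $v/u=\epsilon_2$, $u/w=\epsilon_3$; together with $w=\pm1$ this forces $u,v\in\{\pm1\}$. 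So $h$ is a signed permutation, and since $K$, $g$ and $h$ generate a subgroup of order${}\geq24=|G|$, all of $G$ consists of determinant-$1$ signed permutations; by the order count it is the full such group, and $L=\Z_2e_1\oplus\Z_2e_2\oplus\Z_2e_3\iso\Z_2^3$, which is conclusion~(\ref{item-G-is-S4-acting-nonfaithfully}). (Alternatively, your ``simultaneous normalization'' concern is a complement-conjugacy question, settled by $H^1\bigl(S_3,(1+4\Z_2)^3\bigr)=0$ for the permutation module, since $\Z_2^\times=\{\pm1\}\times(1+4\Z_2)$ with $1+4\Z_2$ torsion-free.) With that insertion your argument is complete and, unlike the paper's one-line dispatch, self-contained.
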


\begin{proof}
Suppose $G$ is not faithful on $L/2L$.  The kernel of
$\SL(L)\to\SL(L/2L)$ is a pro-$2$-group, so its intersection $K$ with $G$
is a $2$-group.  In fact it is elementary abelian because a
modification of Siegel's argument
\cite[\S39]{Siegel-symplectic-geometry} shows that every element of
$K$ has order $1$ or~$2$.  (If $h\in K$ has order~$4$ then decompose
$L$ as a direct sum of $h^2$'s eigenlattices using
lemma~\ref{lem-lattice-is-sum-of-eigenlattices}.  Restricting to the
$-1$ eigenlattice gives $h^2=-I$ and $h=I+2T$ for some matrix $T$ over
$\Z_2$.  It is easy to see that these are incompatible modulo~$4$.)

So we may choose a basis for $V$ in which
$K$ is diagonal.  Obviously each involution in $K$ negates two
coordinates, and $K$ is $2$ or $2^2$.  If $K\iso2^2$ then $V$ is the
sum of three distinct $1$-dimensional representations of $K$, and we
consider the intersections of $L$ with these
subspaces.  Using lemma~\ref{lem-lattice-is-sum-of-eigenlattices}
twice shows that $L$ is their direct sum.  
Since $G$ normalizes $K$, it permutes these summands.
Because $G$ is finite and the torsion subgroup of $\Q_2^*$ is
$\{\pm1\}$, the $G$-stabilizer of a summand acts on it by $\{\pm1\}$.
It follows easily that $L\iso\Z_2^3$ in a manner identifying $G$ with
a subgroup of the $S_4$ in 
\eqref{item-G-is-S4-acting-nonfaithfully}.  
So either
\eqref{item-G-is-S4-acting-nonfaithfully} or
\eqref{item-G-is-at-most-12} holds.

On the other hand, if $K$ has a single involution, then lemma~\ref{lem-lattice-is-sum-of-eigenlattices}
shows that $L$ is the direct sum of its eigenlattices. Of course,
$G/K$ preserves the images of these sublattices in $L/2L$.  The
$L_3(2)$-stabilizer of a point of $\P^2\F_2$ and a line not containing
it is $S_3$, so $|G|\leq12$.
\end{proof}

$\PGL(V)$ acts on its Bruhat-Tits building $\B$.  We recall from
\cite{BT} that this is the simplicial complex with one vertex for each
lattice in $V$, up to scaling.  Often we speak of ``the'' lattice
associated to a vertex when the scale is unimportant.  Two vertices
are joined if and only if one of the lattices contains the other of
index~$2$ (after scaling).  Whenever three vertices are joined
pairwise by edges, there is a $2$-simplex spanning those edges.
$\PGL(V)$ acts transitively on vertices, with $\PGL(L)$ being the
stabilizer of the vertex corresponding to $L$.  The link of this
vertex is the incidence graph of the points and lines of
$\P(L/2L)\iso\P^2\F_2$, on which $\PGL(L)$ acts as
$\GL(L/2L)$.

This subgroup $\PGL(L)$  is a maximal compact subgroup, and we scale
the Haar measure on $\PGL(V)$ so that this subgroup has mass~$1$.  If
$\PG$ is any discrete subgroup of $\PGL(V)$ then it acts on $\B$ with finite
stabilizers.  For each $\PG$-orbit $\Sigma$ of vertices, let
$n_\Sigma$ be the order of the $\PG$-stabilizer of any member of
$\Sigma$.  One can express $\PG$'s covolume (the Haar measure of
$\PGL(V)/\PG$) as the sum of $1/n_\Sigma$ over all $\PG$-orbits
$\Sigma$.  

A lattice in $\PGL(V)$ means a discrete subgroup of finite covolume.
This double use of ``lattice'' is a standard confusion; we hope
context will make our meaning clear.  Mumford \cite{Mumford}
exhibited a lattice in $\PGL_3(\Q_2)$ that acts transitively on
vertices of $\B$, with stabilizer isomorphic to $F_{21}$.  Its
covolume is $1/21$.  The goal of this paper is to show that this is
the smallest possible covolume, and that there is exactly one other
lattice realizing it.

\begin{lemma}
\label{lem-initial-bounds-on-lattices-in-PGL3Q2}
Suppose $\PG$ is a lattice in $\PGL(V)$ of
covolume${}\leq1/21$.  Then either
\begin{enumerate}
\item
\label{item-every-stabilizer-L32}
every vertex stabilizer is isomorphic to $L_3(2)$ and there
are${}\leq8$ orbits, or
\item
\label{item-transitive-with-stabilizer-S4-or-F21}
$\PG$ acts transitively on vertices, with stabilizer isomorphic to
  $F_{21}$ or $S_4$, or
\item
\label{item-2-orbits-with-stabilizers-L32-and-S4}
$\PG$ acts with two orbits on vertices, with stabilizers isomorphic to $L_3(2)$
  and $S_4$.
\end{enumerate}
\end{lemma}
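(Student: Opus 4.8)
The plan is to determine the possible vertex stabilizers and then solve a small Diophantine inequality coming from the covolume formula $\sum_\Sigma 1/n_\Sigma$. Fix a vertex $v$, with associated lattice $L$, so that its $\PGL(V)$-stabilizer is $\PGL(L)$ and the $\PG$-stabilizer $\PG_v$ is a finite subgroup of $\PGL(L)$. First I would check that $\PG_v$ lifts to a finite subgroup of $\SL(L)$, so that Lemma~\ref{lem-finite-subgroups-of-GL3Z2} applies. By Lemma~\ref{lem-passage-between-PGL3-and-SL3} it lifts uniquely to $\SL(V)$; writing a lift of one of its elements as $cM$ with $M\in\GL(L)$ and $c\in\Q_2^*$, the requirement $\det(cM)=1$ gives $c^3=(\det M)^{-1}$, and since $\det M$ is a $2$-adic unit so is $c^3$, forcing $c\in\Z_2^*$ and hence $cM\in\SL(L)$.

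The key observation is then purely numerical: the covolume is a sum of nonnegative terms $1/n_\Sigma$ and is ${}\le1/21$, so each term satisfies $1/n_\Sigma\le1/21$, i.e.\ $n_\Sigma\ge21$. In particular $|\PG_v|\ge21$, which excludes alternative~\eqref{item-G-is-at-most-12} of Lemma~\ref{lem-finite-subgroups-of-GL3Z2}. Alternative~\eqref{item-G-is-S4-acting-nonfaithfully} gives $\PG_v\iso S_4$ directly. In alternative~\eqref{item-G-acts-faithfully-modulo-2} the lift embeds in $\GL(L/2L)=L_3(2)$, so $\PG_v$ is a subgroup of $L_3(2)$ of order${}\ge21$; as every proper subgroup lies in a maximal one and the maximal subgroups are the two classes of $S_4$ and the $F_{21}$, such a subgroup must be $L_3(2)$, $S_4$, or $F_{21}$ itself. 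Hence every vertex stabilizer is isomorphic to $F_{21}$, $S_4$, or $L_3(2)$, of order $21$, $24$, or $168$.

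Finally I would count orbits. Let $a$, $b$, $c$ be the numbers of vertex orbits whose stabilizer is isomorphic to $L_3(2)$, $S_4$, $F_{21}$ respectively; since each orbit contributes a term${}\ge1/168$ there are only finitely many, and
\[
\frac{a}{168}+\frac{b}{24}+\frac{c}{21}\le\frac1{21}.
\]
Clearing denominators turns this into $a+7b+8c\le8$ with $a,b,c\ge0$ integers and $a+b+c\ge1$. Its solutions are exactly $(a,0,0)$ with $1\le a\le8$, which is~\eqref{item-every-stabilizer-L32}; $(0,1,0)$ and $(0,0,1)$, which together are~\eqref{item-transitive-with-stabilizer-S4-or-F21}; and $(1,1,0)$, which is~\eqref{item-2-orbits-with-stabilizers-L32-and-S4}. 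I expect the one step needing care to be the lift to $\SL(L)$ that licenses Lemma~\ref{lem-finite-subgroups-of-GL3Z2}; everything after that is bounded bookkeeping.
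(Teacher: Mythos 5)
Your proposal is correct and takes essentially the same route as the paper: bound each stabilizer order below by~$21$, use lemmas~\ref{lem-passage-between-PGL3-and-SL3} and~\ref{lem-finite-subgroups-of-GL3Z2} to force every vertex stabilizer to be $F_{21}$, $S_4$ or $L_3(2)$, and then enumerate which sums of $1/21$, $1/24$ and $1/168$ are${}\leq1/21$ (the paper leaves this last enumeration implicit, while you spell out the inequality $a+7b+8c\leq8$). Your determinant-and-valuation check that the lift of the stabilizer actually lies in $\SL(L)$ for the vertex lattice $L$ is a careful touch; the paper instead invokes the standard fact that a finite subgroup of $\SL(V)$ preserves \emph{some} lattice, which suffices equally well since lemma~\ref{lem-finite-subgroups-of-GL3Z2} applies to any invariant lattice.
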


\begin{proof}
Obviously every vertex stabilizer has order${}\geq21$.  We claim that
every finite subgroup $G$ of $\PGL(V)$ of order${}\geq21$ is
isomorphic to $F_{21}$, $S_4$ or $L_3(2)$.  It suffices by
lemma~\ref{lem-passage-between-PGL3-and-SL3} to prove this with $\PGL$
replaced by $\SL$.  Obviously $G$ preserves some $2$-adic lattice $L$,
and lemma~\ref{lem-finite-subgroups-of-GL3Z2} implies that either
$G\iso S_4$ or else $G$ acts faithfully on $L/2L$.  In the latter case
we use the fact that every subgroup of $L_3(2)$ of order${}\geq21$ is
isomorphic to $F_{21}$, $S_4$ or $L_3(2)$.  Having constrained the
vertex stabilizers in $\PG$ to these three groups, one works out which
sums of $1/21$, $1/24$ and $1/168$ are${}\leq1/21$.
\end{proof}

Some of these possibilities cannot occur.  The key is to understand
the three possible $S_4$-actions on a $\Z_2$-lattice $L$:

\begin{lemma}
\label{lem-3-actions-of-S4}
Suppose $G\sset\SL(L)$ is isomorphic to
$S_4$.  Then the pair $(G,L)$ is isomorphic to exactly one of the
following pairs; in each case $S_4$ acts by the determinant~$1$
subgroup of the group of signed permutations.
\begin{enumerate}
\item
\label{item-S4-0}
$(S_4,L_0:=\Z_2^3)$
\item
\label{item-S4-l}
$\bigl(S_4,L_l:=\set{x\in L_0}{x_1+x_2+x_3\cong0\mod2}\bigr)$
\item
\label{item-S4-p}
$\bigl(S_4,L_p:=\set{x\in L_0}{x_1\cong x_2\cong x_3\mod2}\bigr)$.
\end{enumerate}
\end{lemma}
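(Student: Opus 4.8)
The plan is to first rigidify the representation, then reduce the classification of \emph{pairs} to a classification of $G$-invariant lattices up to scaling, and finally enumerate those lattices. Begin with rigidity. A faithful $3$-dimensional representation of $S_4$ must be irreducible, since the $1$- and $2$-dimensional irreducibles all contain the Klein four-group $V_4$ in their kernels, so no direct sum of proper irreducibles can be faithful. Hence $V$ is one of the two $3$-dimensional irreducibles, the standard (reflection) representation $\rho$ or $\rho\tensor\mathrm{sgn}$. Since transpositions act with determinant $-1$ in $\rho$, only $\rho\tensor\mathrm{sgn}$ lands in $\SL$, and this is precisely the realization by determinant-$1$ signed permutations on $\Q_2^3$. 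Because characters determine representations in characteristic $0$, I would conclude $V\iso\rho\tensor\mathrm{sgn}$ over $\Q_2$, so after a change of basis I may assume $G$ is exactly the determinant-$1$ signed permutation group on $V=\Q_2^3$; write $L_0=\Z_2^3$.

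Next I would reduce to lattices up to scaling by computing $N=N_{\GL(V)}(G)$. Absolute irreducibility gives $\mathrm{End}_{\Q_2[G]}(V)=\Q_2$, so the centralizer $C_{\GL(V)}(G)=\Q_2^*$ consists of scalars, and since $S_4$ is complete ($\mathrm{Out}=1$, $Z=1$) conjugation identifies $N/\Q_2^*$ with $\mathrm{Aut}(G)=G$; hence $N=\Q_2^* G$. Therefore an isomorphism of pairs $(G,L)\iso(G,L')$ is exactly some $\lambda g\in\Q_2^* G$ with $\lambda g L=L'$, i.e.\ $\lambda L=L'$: two pairs are isomorphic iff $L,L'$ agree up to scaling (equivalently up to a power of $2$, since units preserve lattices). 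Moreover, writing $W:=\langle G,-I\rangle=G\times\{\pm I\}$ for the full signed-permutation group (of order $48$; note $-I\notin\SL$), and observing that $-I$ preserves every lattice, a lattice is $G$-invariant iff it is $W$-invariant. So the task becomes: classify $W$-invariant lattices up to scaling.

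For the enumeration, let $a_i\subseteq\Q_2$ be the image of $L$ under the $i$-th coordinate projection; these are full $\Z_2$-lattices in $\Q_2$, equal to one another by the $S_3$-symmetry, so all equal $2^m\Z_2$, and rescaling by $2^{-m}$ I arrange $a_i=\Z_2$. The crucial step uses the sign changes: for $x\in L$ the element $s_1=\diag(-1,1,1)\in W$ gives $x-s_1x=2x_1e_1\in L$, and letting $x_1$ range over $a_1=\Z_2$ yields $2\Z_2 e_1\subseteq L$; by symmetry $2L_0\subseteq L$, while $a_i=\Z_2$ forces $L\subseteq L_0$. Now $L/2L_0$ is a $W$-submodule of $L_0/2L_0\iso\F_2^3$, on which the sign changes act trivially (as $-1=1$ in $\F_2$) so $W$ acts through $S_3$ by permutation; the submodules of this $\F_2[S_3]$-module are $0$, $\langle(1,1,1)\rangle$, $\{x_1+x_2+x_3=0\}$, and all of $\F_2^3$, pulling back respectively to $2L_0$, $L_p$, $L_l$, $L_0$. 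Since $2L_0$ has projections $2\Z_2$, our normalization excludes it, leaving $L\in\{L_0,L_l,L_p\}$.

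Finally I would check these three lie in distinct scaling classes, so that the three pairs are pairwise non-isomorphic and ``exactly one'' holds: the indices $[L_0:L_0]=1$, $[L_0:L_l]=2$, $[L_0:L_p]=4$ have $2$-adic valuations $0,1,2$, pairwise distinct modulo $3$, whereas scaling by $2$ changes this valuation by $3$. I expect the main obstacle to be the trapping step in the third paragraph: a priori $W$-invariant lattices could interleave arbitrarily far from $L_0$, and it is the sign changes---through the identity $x-s_1x=2x_1e_1$---together with the projection normalization that confine $L$ to the single window $2L_0\subseteq L\subseteq L_0$ and render the count finite. The other delicate point is the normalizer computation, which is exactly what upgrades ``three lattices up to scaling'' into ``exactly three pairwise non-isomorphic pairs''.
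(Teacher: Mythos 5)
Your proof is correct, and its combinatorial core coincides with the paper's: trap the lattice in a window of index $8$ using the sign-change involutions, then enumerate the four $S_3$-invariant submodules of $\F_2^3$ (your identity $x-s_1x=2x_1e_1$ is precisely the mechanism behind the paper's assertion that $L$ lies between $L_0$ and $\frac{1}{2}L_0$). Where you differ is the scaffolding on either side of that step. The paper skips your first two paragraphs entirely by building the reference lattice \emph{intrinsically}: its $L_0$ is the span of the fixed-point sublattices of the three involutions in the Klein four-group $K_4\sset G$, asserted to be ``obviously'' of type $0$; this removes the need for character-theoretic rigidity and any change of basis, and---more importantly---since that $L_0$ is canonically attached to the pair $(G,L)$, the index between $L$ and $L_0$ is automatically an invariant of the pair, so the three cases are distinguished with no normalizer computation and no mod-$3$ valuation trick. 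Your version buys precision exactly where the paper is terse: ``obviously of type $0$'' conceals the representation-theoretic facts you spell out (the fixed lattices are rank one and permuted with signs because $V\iso\rho\tensor\mathrm{sgn}$, the unique faithful $3$-dimensional representation landing in $\SL$), and your identification of pair-isomorphism with scaling via $N_{\GL(V)}(G)=\Q_2^*\,G$ (Schur's lemma plus completeness of $S_4$) makes the ``exactly one'' clause airtight rather than implicit. Notably, that normalizer fact is not wasted effort in the context of this paper: the uniqueness argument of section~\ref{sec-uniqueness} invokes the very same computation when it uses that $S_4$ is self-normalizing in $\PGL_3(\Q_2)$. In short: same key mechanism, with your proof trading the paper's canonical-sublattice device for an explicit rigidification-plus-normalizer argument---longer, but with the isomorphism bookkeeping fully justified.
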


\noindent
We refer to the three cases as types $0$, $l$ and $p$.  The notation
reflects the fact that $L_l$ and $L_p$ correspond
to a line and point in $\P(L_0/2L_0)\iso\P^2\F_2$ respectively.  We
have already seen type~$0$ in
lemma~\ref{lem-finite-subgroups-of-GL3Z2}.  If a group $G\sset\PGL(V)$
isomorphic to $S_4$ fixes a vertex $v$ of $\B$, then we say that $v$
has type $0$, $p$ or~$l$ according to the type of the action of (the
lift to $\SL(V)$ of) $G$ on the lattice represented by~$v$.
   
\begin{proof}
Consider the sublattice $L_0$ of $L$ spanned by the fixed-point
sublattices of the three involutions in the Klein $4$-group $K_4\sset
G$.  Obviously $L_0$ has type~$0$.  Now, $L$ lies
between $\frac{1}{2}L_0$ and $L_0$, so it corresponds to a
$G$-invariant subspace of $L_0/2L_0$.  Besides the $0$ subspace there
are only two, leading to \eqref{item-S4-l} and \eqref{item-S4-p}.  The
three cases may be distinguished by $[L:L_0]$, which is $1$, $2$ or
$4$ respectively.
\end{proof}

\begin{lemma}
\label{lem-S4-actions-on-neighbors}
Suppose $G\sset\PGL(V)$
is isomorphic to $S_4$ and stabilizes a vertex $v$ of $\B$.
\begin{enumerate}
\item
\label{item-type-0-implies-types-p-and-l}
If $v$ has type $0$ then $G$ stabilizes exactly two neighbors of $v$, which
have types $p$ and $l$ with respect to $G$.
\item
\label{item-type-p-or-l-implies-type-0}
If $v$ has type $p$ or $l$ then $G$ stabilizes exactly one neighbor of $v$,
which has type $0$ with respect to $G$.
\end{enumerate}
\end{lemma}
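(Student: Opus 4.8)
The plan is to convert everything into linear algebra over $\F_2$. Lift $G$ to $\SL(V)$ using lemma~\ref{lem-passage-between-PGL3-and-SL3}, and let $L$ be a $\Z_2$-lattice representing $v$. The neighbors of $v$ correspond to the proper nonzero subspaces of $L/2L\iso\F_2^3$: a $2$-dimensional $W$ gives the index-$2$ sublattice $\set{x\in L}{\bar x\in W}$, and a $1$-dimensional $W=\generate{\bar w}$ gives the index-$2$ superlattice $L+\Z_2\cdot\frac12 w$. Because $G$ acts $\Z_2$-linearly on $L$, this correspondence is $G$-equivariant, so the $G$-stabilized neighbors of $v$ are exactly the $G$-invariant proper nonzero subspaces of $L/2L$. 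Over $\F_2$ a $1$-dimensional subspace is $G$-invariant iff its nonzero vector is $G$-fixed, and a $2$-dimensional subspace is $G$-invariant iff it is the kernel of a $G$-fixed functional; so in each type I only have to count the fixed vectors of $G$ on $L/2L$ and on its dual.

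For part~\eqref{item-type-0-implies-types-p-and-l} I take $L=L_0=\Z_2^3$. Each involution in $K_4\sset G$ negates two coordinates, hence acts trivially on $L_0/2L_0$, so $G$ acts through $G/K_4\iso S_3$ as the permutation representation on $\F_2^3$. The only nonzero fixed vector is $(1,1,1)$ and the only fixed functional is $x_1+x_2+x_3$, so there are exactly two invariant subspaces: the line $\generate{(1,1,1)}$ and the plane $\set{x}{x_1+x_2+x_3\cong0\mod2}$. The plane is the reduction of the sublattice $L_l$, a neighbor of type~$l$; the line gives the superlattice $L_0+\Z_2\cdot\frac12(1,1,1)$, whose double is $L_p$, so this neighbor has type~$p$. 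This proves~\eqref{item-type-0-implies-types-p-and-l}.

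For part~\eqref{item-type-p-or-l-implies-type-0}, lemma~\ref{lem-3-actions-of-S4} lets me conjugate in $\GL(V)$ so that $(G,L)$ is the standard pair $(S_4,L_p)$ or $(S_4,L_l)$; such a conjugation is a type-preserving automorphism of $\B$, so it suffices to treat these two pairs. Existence of a type-$0$ neighbor is free: the type-$p$ and type-$l$ neighbors found in part~\eqref{item-type-0-implies-types-p-and-l} are the vertices $[L_p]$ and $[L_l]$, so by symmetry of the edge relation $[L_0]$ is a $G$-stabilized neighbor of each, of type~$0$. For uniqueness I compute the action of $G$ on $L_p/2L_p$ and on $L_l/2L_l$ in explicit bases and count fixed vectors and fixed functionals as before. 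For $L_p$ there is a unique fixed functional and no fixed vector, so the only invariant subspace is $2$-dimensional, with preimage sublattice $2L_0$; for $L_l$ there is a unique fixed vector and no fixed functional, so the only invariant subspace is $1$-dimensional, with associated superlattice $L_0$. Either way the unique stabilized neighbor is $[L_0]$, of type~$0$. Conceptually, $G$ is acting on $L_p/2L_p$ as the stabilizer of a line of $\P^2\F_2$ and on $L_l/2L_l$ as the stabilizer of a point, inside $\GL(L/2L)\iso L_3(2)$, and such a stabilizer fixes no further subspace.

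The main obstacle is the explicit representation theory in part~\eqref{item-type-p-or-l-implies-type-0}: unlike type~$0$, the group $K_4$ no longer acts trivially modulo~$2$, so $G$ maps to $\GL_3(\F_2)$ with larger image, and I must actually choose bases for $L_p$ and $L_l$, write down the matrices of a $3$-cycle and of a sign-twisted transposition, and check that a fixed vector and a fixed functional never both survive. Recognizing the two images as the point- and line-stabilizers $S_4\sset L_3(2)$ is what guarantees the clean count of exactly one invariant subspace and tells me in advance whether the surviving subspace should be $1$- or $2$-dimensional, hence whether the type-$0$ neighbor appears as a superlattice or a sublattice.
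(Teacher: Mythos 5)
Your proposal is correct and takes essentially the same route as the paper: reduce to the standard models of lemma~\ref{lem-3-actions-of-S4}, identify the $G$-stabilized neighbors of $v$ with the $G$-invariant proper nonzero subspaces of $L/2L$, and count these (exactly two in type~$0$, and exactly one in types $p$ and $l$, where faithfulness from lemma~\ref{lem-finite-subgroups-of-GL3Z2} exhibits $G$ as a point- or line-stabilizer in $\GL(L/2L)\iso L_3(2)$). Your explicit fixed-vector and fixed-functional counts simply spell out what the paper's proof leaves as ``easy to see.''
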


\begin{proof}
Choosing a $G$-equivariant isomorphism of the lattice represented by
$v$ with one of the models in lemma~\ref{lem-3-actions-of-S4} makes
visible the claimed neighboring lattice(s).  In the proof of that
lemma we saw that a type~$0$ $G$-lattice has exactly two $G$-invariant
neighbors.  Similarly, if $L$ is a $G$-lattice of type not $0$, then
$G$ acts faithfully on $L/2L$ (by lemma~\ref{lem-finite-subgroups-of-GL3Z2}).  Then, since $G\iso
S_4$, $G$ is the stabilizer of a point or line in $\P(L/2L)$.  This
makes it easy to see that $G$ fixes only one neighbor of $L$.
\end{proof}

\begin{lemma}
\label{lem-L32-actions-on-neighbors}
Suppose $\PG$ is a subgroup of $\PGL(V)$ and that the $\PG$-stabilizer of some vertex of
$\B$ is isomorphic to $L_3(2)$.  Then the $\PG$-stabilizer of any
neighboring vertex is isomorphic to $S_4$.
\end{lemma}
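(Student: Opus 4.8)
The plan is to identify the stabilizer of the edge joining the given vertex to its neighbor, use the classification of large finite subgroups to cut the possibilities for the neighbor's stabilizer down to two, and then eliminate the larger one with a type argument. Let $v$ be the given vertex with $\PG_v=:G\iso L_3(2)$, let $w$ be a neighbor, and let $L$, $M$ be lattices representing $v$ and $w$. The group $G$ acts on the link of $v$ through its image in $\GL(L/2L)\iso L_3(2)$; since $L_3(2)$ is simple this image is faithful, hence all of $\GL(L/2L)$, so $G$ is transitive on the points and on the lines of $\P(L/2L)$. The neighbor $w$ corresponds to a point or a line, so the stabilizer in $G$ of $w$---which is exactly $\PG_v\cap\PG_w=:G_w$---is a point- or line-stabilizer in $L_3(2)$, whence $G_w\iso S_4$.

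Next I would pin down $\PG_w$ up to two cases. Being a vertex stabilizer in the discrete group $\PG$, it is finite, and it contains $G_w\iso S_4$, so $|\PG_w|\geq24$. By the classification of finite subgroups of $\PGL(V)$ of order${}\geq21$ obtained in the proof of Lemma~\ref{lem-initial-bounds-on-lattices-in-PGL3Q2}, $\PG_w$ is isomorphic to $F_{21}$, $S_4$ or $L_3(2)$; as $|F_{21}|=21<24$ it cannot contain a copy of $S_4$, so $\PG_w\iso S_4$ or $\PG_w\iso L_3(2)$. In the first case $\PG_w=G_w\iso S_4$ and we are finished, so everything reduces to ruling out $\PG_w\iso L_3(2)$.

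This last step is the crux, and I would settle it with Lemma~\ref{lem-S4-actions-on-neighbors}(\ref{item-type-p-or-l-implies-type-0}). Since $G_w$ is a subgroup of $G=\GL(L/2L)$, it acts faithfully on $L/2L$, so $v$ has type $p$ or $l$ with respect to $G_w$ (recall that among $S_4$-lattices only type~$0$ fails to act faithfully mod~$2$, by Lemmas~\ref{lem-finite-subgroups-of-GL3Z2} and~\ref{lem-3-actions-of-S4}). Lemma~\ref{lem-S4-actions-on-neighbors}(\ref{item-type-p-or-l-implies-type-0}) then forces $w$, being a $G_w$-fixed neighbor of $v$, to be the unique such neighbor and to have type~$0$ with respect to $G_w$. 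But if $\PG_w\iso L_3(2)$, then $\PG_w=\GL(M/2M)$ and its subgroup $G_w$ would act faithfully on $M/2M$, making $w$ of type $p$ or $l$---a contradiction. Hence $\PG_w\iso S_4$. The one delicate point is precisely this clash of types: $w$ is compelled to be type~$0$ when examined from the side of $v$, yet an $L_3(2)$-stabilizer at $w$ would compel it to be type $p$ or $l$; the remainder is routine bookkeeping with the transitive $L_3(2)$-action on the link.
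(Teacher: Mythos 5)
Your argument is essentially the paper's: both proofs identify the edge stabilizer $G_w=\PG_v\cap\PG_w$ as a point- or line-stabilizer $S_4$ in $L_3(2)$, invoke lemma~\ref{lem-S4-actions-on-neighbors}(\ref{item-type-p-or-l-implicit-dummy}) --- more precisely part~(\ref{item-type-p-or-l-implies-type-0}) --- to conclude that $w$ has type~$0$ with respect to $G_w$, and then use the mod-$2$ nonfaithfulness of that $S_4$ to cap $\PG_w$. Your detour through the order-${}\geq21$ classification ($F_{21}$, $S_4$ or $L_3(2)$) is correct but unnecessary: once you know $\PG_w$ contains an $S_4$ acting nonfaithfully on $M/2M$, lemma~\ref{lem-finite-subgroups-of-GL3Z2} applies directly ($\PG_w$ is not faithful mod~$2$ and has order${}>12$, so case~(\ref{item-G-is-S4-acting-nonfaithfully}) forces $\PG_w\iso S_4$), which is how the paper finishes in one line rather than separately ruling out $L_3(2)$ by the type clash.

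One genuine slip to fix: you justify finiteness of $\PG_w$ by saying it is ``a vertex stabilizer in the discrete group $\PG$,'' but the lemma does not assume $\PG$ discrete --- its only hypothesis is that $\PG$ is a subgroup of $\PGL(V)$ with $\PG_v\iso L_3(2)$. The paper instead deduces finiteness from the hypothesis itself: if $\PG_w$ were infinite, then (since $\PG_w$ permutes the $14$ neighbors of $w$, and the subgroup fixing $v$ lies in the finite group $\PG_v$) the $\PG$-stabilizer of $v$ would be infinite, a contradiction. Concretely, $[\PG_w:G_w]\leq14$ and $|G_w|\leq|\PG_v|=168$, so $\PG_w$ is finite with no appeal to discreteness. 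With that one repair your proof is complete and matches the paper's in all essentials.

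(Ignore the phantom reference in the first sentence: the relevant citation is lemma~\ref{lem-S4-actions-on-neighbors}, part~(\ref{item-type-p-or-l-implies-type-0}), exactly as you used it.)
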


\begin{proof}
Suppose $v$ is a vertex with stabilizer $G\iso L_3(2)$, $w$ is any
neighboring vertex, and $L$ and $M$ are the associated lattices.
Write $H$ for the $G$-stabilizer of $w$; it is the stabilizer of a point or
line of $\P(L/2L)$, so it is isomorphic to $S_4$.  Also,
$v$ has type $p$ or $l$ with respect to $H$, since $H$ acts faithfully
on $L/2L$ (indeed all of $G$ does).  By
lemma~\ref{lem-S4-actions-on-neighbors}\eqref{item-type-p-or-l-implies-type-0},
$w$ has type $0$ with respect to $H$.

Now, the full $\PG$-stabilizer of $w$ is finite (otherwise the
$\PG$-stabilizer of $v$ would be infinite), so
lemma~\ref{lem-finite-subgroups-of-GL3Z2} applies to it.  Since it
contains an $S_4$ acting nonfaithfully on $M/2M$, it can be no larger
than $S_4$.
\end{proof}

\begin{lemma}
\label{lem-second-bounds-on-lattices-in-PGL3Q2}
If $\PG$ is a lattice in $\PGL(V)$ of covolume${}\leq1/21$, then its
covolume is exactly $1/21$, and either it acts transitively on the
vertices of $\B$, with stabilizer isomorphic to $F_{21}$, or else it
has two orbits, with stabilizers isomorphic to $L_3(2)$ and $S_4$.
\end{lemma}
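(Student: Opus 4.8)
The plan is to invoke Lemma~\ref{lem-initial-bounds-on-lattices-in-PGL3Q2}, which leaves three possibilities, and then eliminate all but those of covolume exactly $1/21$. Reading off the covolumes, case~\eqref{item-every-stabilizer-L32} gives covolume $k/168$ with $k\leq8$, case~\eqref{item-transitive-with-stabilizer-S4-or-F21} gives $1/21$ (stabilizer $F_{21}$) or $1/24$ (stabilizer $S_4$), and case~\eqref{item-2-orbits-with-stabilizers-L32-and-S4} gives $1/168+1/24=1/21$. So it suffices to rule out the all-$L_3(2)$ case and the transitive-$S_4$ case; the surviving possibilities are then exactly the two asserted, each of covolume $1/21$.

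I would dispose of the all-$L_3(2)$ case immediately. Every vertex of $\B$ has neighbors, its link being the incidence graph of $\P(L/2L)\iso\P^2\F_2$, so picking any vertex $v$ and a neighbor $w$, Lemma~\ref{lem-L32-actions-on-neighbors} forces the $\PG$-stabilizer of $w$ to be isomorphic to $S_4$. But case~\eqref{item-every-stabilizer-L32} requires it to be isomorphic to $L_3(2)$, and $|S_4|\neq|L_3(2)|$, a contradiction.

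The transitive-$S_4$ case is the heart of the argument. Here every vertex stabilizer is a copy of $S_4$, so the type (in the sense following Lemma~\ref{lem-3-actions-of-S4}) of a vertex with respect to its own stabilizer is well-defined; and because an element of $\PG$ carrying one vertex to another conjugates the first stabilizer to the second and intertwines their actions on the respective lattices, this type is constant on each $\PG$-orbit, hence on all of $\B$. I would then fix a vertex $v$ with stabilizer $G\iso S_4$ and split according to the type of $v$, applying Lemma~\ref{lem-S4-actions-on-neighbors}: if $v$ has type~$0$, part~\eqref{item-type-0-implies-types-p-and-l} produces a neighbor $w$ of type~$p$ with respect to $G$, while if $v$ has type $p$ or $l$, part~\eqref{item-type-p-or-l-implies-type-0} produces a neighbor $w$ of type~$0$ with respect to $G$. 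In either situation $G$ stabilizes $w$, so $G\sset G_w$ with $G_w$ the full $\PG$-stabilizer of $w$; since $|G_w|=|G|=24$ we get $G_w=G$. Thus the type of $w$ with respect to its own stabilizer $G_w=G$ differs from the type of $v$, contradicting the constancy established above.

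The step I expect to be the main obstacle is this last contradiction: one must check that the neighbor $w$ supplied by Lemma~\ref{lem-S4-actions-on-neighbors} is fixed by the \emph{entire} stabilizer $G$ of $v$, not merely some subgroup, so that the equality $G_w=G$ genuinely forces $v$ and $w$ to carry their types with respect to the same group and the clash of types is real. With both exceptional cases removed, only the $F_{21}$-transitive action and the two-orbit $(L_3(2),S_4)$ action remain, each of covolume $1/21$, which is the assertion.
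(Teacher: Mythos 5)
Your proposal is correct and follows essentially the same route as the paper: the all-$L_3(2)$ case is discarded via Lemma~\ref{lem-L32-actions-on-neighbors}, and the transitive-$S_4$ case via Lemma~\ref{lem-S4-actions-on-neighbors}, exactly as in the paper's (much terser) proof, which phrases the $S_4$ contradiction as ``at least $3$ orbits'' where you contradict transitivity directly. Your worry about the final step is unfounded, since Lemma~\ref{lem-S4-actions-on-neighbors} already asserts that the neighbor $w$ is stabilized by all of $G$, and your careful justification that type is a $\PG$-orbit invariant and that $G_w=G$ is a correct filling-in of details the paper leaves implicit.
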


\begin{proof}
This amounts to discarding some of the possibilities listed in lemma~\ref{lem-initial-bounds-on-lattices-in-PGL3Q2}.
The case that every vertex stabilizer is isomorphic to $L_3(2)$ is
ruled out by lemma~\ref{lem-L32-actions-on-neighbors}.  And if every vertex stabilizer is
isomorphic to $S_4$ then lemma~\ref{lem-S4-actions-on-neighbors} shows that there are at least
$3$ orbits, ruling out the $S_4$ case of lemma~\ref{lem-initial-bounds-on-lattices-in-PGL3Q2}\eqref{item-transitive-with-stabilizer-S4-or-F21}.
\end{proof}

In section~\ref{sec-Gamma-L-and-Gamma-M} we will exhibit lattices realizing these two
possibilities, and in section~\ref{sec-uniqueness} we will show they are the only
ones. 

\section{The Hermitian $\curlyO$-lattices $L$ and $M$}
\label{sec-L-and-M}

In this section we introduce Hermitian lattices $L$ and $M$ over the
ring of algebraic integers $\curlyO$ in $\Q(\sqrt{-7})$.  In the next
section we will study their isometry groups over $\Zhalf$, which turn
out to be the two densest possible lattices in $\PGL_3(\Q_2)$.  The
construction of $M$ is due to Mumford \cite{Mumford}, and a
description of $L$ appears
without attribution in the ATLAS \cite{ATLAS} entry for $L_3(2)$.  $L$
is unimodular and contains $8$ copies of $M$, while $M$ has
determinant~$7$ and lies in exactly one copy of $L$
(lemma~\ref{lem-superlattices-of-M}).

Let $\lambda$ and $\lambdabar$ be the algebraic integers
$\bigl(-1\pm\sqrt{-7}\bigr)/2$. Let $\curlyO$ be the ring of algebraic
integers in $\Q(\lambda)$, namely $\Z+\lambda\Z$.  Everything about
$\lambda$ and $\lambdabar$ can be derived from the equations
$\lambda+\lambdabar=-1$ and $\lambda\lambdabar=2$.  For example,
multiplying the first by $\lambda$ yields the minimal polynomial
$\lambda^2+\lambda+2=0$.  $\curlyO$ is a Euclidean domain, so its class
group is trivial, so any $\curlyO$-lattice is automatically a free
$\curlyO$-module.  We often regard $\curlyO$ as embedded in $\Z_2$.  There are
two embeddings, and we always choose the one with
$\lambdabar$ mapping to a unit and $\lambda$ to twice a unit.

A Hermitian $\curlyO$-lattice means an $\curlyO$-lattice $L$ equipped with an
$\curlyO$-sesquilinear pairing $\hip{}{}:L\times L\to \Q(\lambda)$, linear in its
first argument and anti-linear in its second, satisfying
$\hip{y}{x}=\overline{\hip{x}{y}}$.  It is common to omit
``Hermitian'', but we will be careful to include it, because the
unadorned word ``lattice'' already has two meanings in this paper.

$L$ is called integral if $\hip{}{}$ is $\curlyO$-valued.  Its
determinant $\det L$ is the determinant of the inner product matrix of
any basis for $L$.  This is a well-defined rational integer since
$\curlyO^*=\{\pm1\}$.  An isometry means an $\curlyO$-module isomorphism
preserving $\hip{}{}$, and we write $\Isom L$ for the group of all
isometries.

The central object of this paper is the Hermitian $\curlyO$-lattice
$$
L:=\biggset{(x_1,x_2,x_3)\in\curlyO^3}{\hbox{%
\begin{tabular}{l}%
\hbox{$x_i\cong x_j$ mod $\lambdabar$ for all $i,j$ and}\\
\hbox{$x_1+x_2+x_3\cong0$ mod $\lambda$}%
\end{tabular}}}
$$ using one-half the standard Hermitian form,
$\hip{x}{y}=\frac{1}{2}\sum x_i\ybar_i$. The norm $x^2$ of a vector
means its inner product with itself.  
We call a norm~$2$ vector a root.
$\Isom L$ contains the signed
permutations, which we call the obvious isometries.  Using them
simplifies every verification below to a few examples.

\begin{lemma}
\label{lem-roots-generate-L}
$L$ is integral and unimodular, its minimal norm is~$2$, and has basis
$(2,0,0)$, $(\lambdabar,\lambdabar,0)$ and $(\lambda,1,1)$.  The full
set of roots consists of their $42$ images under obvious isometries.
\end{lemma}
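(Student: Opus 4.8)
The plan is to establish the four assertions in turn, using the obvious (signed-permutation) isometries to reduce each to a couple of representative cases. Write $e_1=(2,0,0)$, $e_2=(\lambdabar,\lambdabar,0)$, $e_3=(\lambda,1,1)$ and let $N\colon\curlyO\to\Z$, $N(a+b\lambda)=a^2-ab+2b^2$, be the norm, recalling $\curlyO/(\lambda)\iso\curlyO/(\lambdabar)\iso\F_2$ with $\lambda\cong1$, $\lambdabar\cong0\pmod{\lambdabar}$ and $\lambda\cong0$, $\lambdabar\cong1\pmod\lambda$. First I would check that $e_1,e_2,e_3$ satisfy the two defining congruences (a direct computation from $\lambda+\lambdabar=-1$ and $\lambda\lambdabar=2$), and compute the $\hip{e_i}{e_j}$ to get the Gram matrix
\[
\begin{pmatrix}
2 & \lambda & \lambdabar\\
\lambdabar & 2 & -1\\
\lambda & -1 & 2
\end{pmatrix}.
\]
Its entries lie in $\curlyO$ and its determinant is $1$ (using $\lambda^2+\lambdabar^2=-3$). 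Granting that the $e_i$ are a basis, integrality is then immediate from the integral Gram entries, and unimodularity from the determinant being a unit.

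\emph{The $e_i$ are a basis.} I would prove $L=\curlyO e_1+\curlyO e_2+\curlyO e_3=:L'$ by a double index count in $\curlyO^3$. The two congruences cut out index $8$: reduction mod $\lambdabar$ into $(\curlyO/(\lambdabar))^3\iso\F_2^3$ forces the coordinates onto the diagonal, index $4$, and $x_1+x_2+x_3\cong0\pmod\lambda$ contributes index $2$, these being independent as $(\lambda),(\lambdabar)$ are coprime. Meanwhile the matrix with rows $e_1,e_2,e_3$ is triangular with determinant $2\lambdabar$, so $[\curlyO^3:L']=|N(2\lambdabar)|=8$. As $L'\sset L$ and both have index $8$, they agree.

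\emph{Minimal norm.} For $x\in L$, $x^2=\frac{1}{2}\sum_iN(x_i)$ is a nonnegative integer (integrality together with $\curlyO\cap\R=\Z$), so I need only exclude $x^2=1$, i.e.\ $\sum_iN(x_i)=2$. Since $N$ represents $0,1,2$ only by $0$, $\pm1$, and $\{\pm\lambda,\pm\lambdabar\}$, such an $x$ would have either a single norm-$2$ coordinate or two coordinates $\pm1$; either way the coordinates fail to be all congruent mod $\lambdabar$, so $x\notin L$. Thus the minimal norm is $2$, realized by $e_1$.

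\emph{The roots.} This carries the most content. A root has $\sum_iN(x_i)=4$, and because $N$ never equals $3$ the admissible norm-profiles are just $(4,0,0)$, $(2,2,0)$, $(2,1,1)$ up to permutation. Imposing both congruences on each profile pins the vectors down: in $(4,0,0)$ the nonzero coordinate is divisible by $\lambda$ and by $\lambdabar$, hence by $2$, giving $(\pm2,0,0)$; in $(2,2,0)$ the two nonzero coordinates must be $\pm\lambdabar$; and in $(2,1,1)$ the norm-$2$ coordinate must be $\pm\lambda$ with the others $\pm1$. These are precisely the signed-permutation orbits of $e_1$, $e_2$, $e_3$, of sizes $6$, $12$, $24$, totalling $42$. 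The one point needing care is completeness of the profile list, which is exactly the absence of norm-$3$ elements in $\curlyO$; after that the two $\F_2$-valued congruences force everything, so the main obstacle is organizing this finite case-check rather than any single hard computation.
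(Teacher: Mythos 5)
Your proof is correct in outline, but your basis step takes a genuinely different route from the paper's. The paper proves that the three listed vectors generate $L$ constructively: given $x\in L$, it subtracts a multiple of $(\lambda,1,1)$ to kill the third coordinate, uses the mod-$\lambdabar$ congruence to see that $\lambdabar$ divides the remaining two, subtracts a multiple of $(\lambdabar,\lambdabar,0)$, and concludes that the first coordinate is divisible by both $\lambda$ and $\lambdabar$, hence by $2$. Your double index count is a valid replacement: $[\curlyO^3:L]=8$ because the two congruences are independent (the ideals $(\lambda)$ and $(\lambdabar)$ are comaximal, as $\lambda+\lambdabar=-1$) and cut out indices $4$ and $2$, while $[\curlyO^3:L']=|N(2\lambdabar)|=8$ from the triangular coordinate matrix. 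The paper's reduction is more elementary and self-contained; yours is more systematic and would adapt with less work to similar lattices, at the cost of invoking the index formula $[\curlyO^3:A\curlyO^3]=|N(\det A)|$ (harmless here since $\curlyO$ is Euclidean). Your enumeration of roots by norm profiles $(4,0,0)$, $(2,2,0)$, $(2,1,1)$, justified by the non-representation of $3$ by $N$, is essentially the paper's check ``the only possibilities for the components are $0$, $\pm1$, $\pm\lambda$, $\pm\lambdabar$, $\pm2$,'' organized slightly more explicitly.

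One sentence in your minimal-norm step is wrong as stated, though the conclusion survives. You claim that any $x$ with $\sum_i N(x_i)=2$ has coordinates that ``fail to be all congruent mod $\lambdabar$.'' This is false when the single norm-$2$ coordinate is $\pm\lambdabar$: for instance $(\lambdabar,0,0)$ has all coordinates $\cong0\pmod{\lambdabar}$. Such vectors are instead excluded by the \emph{second} congruence, since $\lambdabar\cong1\pmod{\lambda}$ gives $x_1+x_2+x_3\cong\pm1\not\cong0\pmod{\lambda}$. You handle the analogous subcase correctly in the $(4,0,0)$ profile of the root enumeration, where you invoke divisibility by both $\lambda$ and $\lambdabar$; the same two-pronged check is what is needed here, and with that one-line repair the proof is complete.
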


\begin{proof}
It's easy to see that the listed vectors  lie in $L$.  To see they generate
it, consider an arbitrary element of $L$.  By adding a multiple of
$(\lambda,1,1)$ we may suppose the last coordinate is $0$.  Then the
$x_i\cong x_j$ mod $\lambdabar$ condition shows that $\lambdabar$ divides the
remaining coordinates.  By adding a multiple of $(\lambdabar,\lambdabar,0)$ we
may suppose the second coordinate is also $0$.  Then
$x_1+x_2+x_3\cong0$ mod $\lambda$ says that the first coordinate is
divisible by $\lambda$ as well as $\lambdabar$, hence by~$2$.  So it lies in
the $\curlyO$-span of $(2,0,0)$.  We have shown that the three given roots
form a basis for $L$, and computing their inner product
matrix shows that $L$ is integral with determinant~$1$.

All that remains is to enumerate the vectors of norm${}\leq2$
and see that they are as claimed.  This is easy because the
only possibilities for the components are $0$, $\pm1$, $\pm\lambda$,
$\pm\lambdabar$ and $\pm2$.
\end{proof}

The word ``root'' is usually reserved for vectors negated by
reflections, and our next result justifies our use of the term.

\begin{lemma}
\label{lem-roots-of-L-give-reflections}
$\Isom L$ is transitive on roots and contains the reflection in any root $r$, i.e., the map
$$
x\mapsto x-2\frac{\hip{x}{r}}{r^2}r.
$$
\end{lemma}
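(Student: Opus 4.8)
The plan is to dispose of the reflection first, since it is nearly automatic, and then use reflections to deduce transitivity. For a root $r$ we have $r^2=2$, so the formula in the statement collapses to $s_r\colon x\mapsto x-\hip{x}{r}r$. First I would note that this map is $\curlyO$-linear in $x$, because $\hip{}{}$ is linear in its first argument. Since $L$ is integral (lemma~\ref{lem-roots-generate-L}), $\hip{x}{r}\in\curlyO$ for every $x\in L$, whence $\hip{x}{r}r\in L$ and so $s_r(L)\sset L$. As $s_r$ is visibly an involution, $s_r(L)=L$. Finally a one-line computation, expanding $\hip{s_r(x)}{s_r(y)}$ and using $\hip{r}{r}=2$ together with $\hip{r}{y}=\overline{\hip{y}{r}}$, shows $\hip{s_r(x)}{s_r(y)}=\hip{x}{y}$. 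Hence $s_r\in\Isom L$ for every root $r$.

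For transitivity, lemma~\ref{lem-roots-generate-L} tells us that the $42$ roots are exactly the images of $(2,0,0)$, $(\lambdabar,\lambdabar,0)$ and $(\lambda,1,1)$ under the obvious isometries, so under the obvious isometries alone there are at most three orbits, represented by these three vectors. It therefore suffices to link the three representatives by isometries, and reflections accomplish this. Reflecting in the root $(\lambda,1,1)$ I would compute $\hip{(2,0,0)}{(\lambda,1,1)}=\lambdabar$, giving $s_{(\lambda,1,1)}(2,0,0)=(0,-\lambdabar,-\lambdabar)$, a signed permutation of $(\lambdabar,\lambdabar,0)$; and $\hip{(\lambdabar,\lambdabar,0)}{(\lambda,1,1)}=-1$ (using $\lambdabar^2+\lambdabar=-2$), giving $s_{(\lambda,1,1)}(\lambdabar,\lambdabar,0)=(-1,-\lambda,1)$, a signed permutation of $(\lambda,1,1)$. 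Thus the three orbits coalesce into one, and since $\Isom L$ contains both the obvious isometries and every reflection, it is transitive on roots.

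Every verification here is short, so there is no genuine obstacle; the two points deserving care are the assertion that $s_r$ preserves $L$, which rests on integrality of $L$ rather than on any special property of the particular root $r$, and the minor bookkeeping with signs and coordinate permutations needed to recognize the images computed above as roots of the claimed types.
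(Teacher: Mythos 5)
Your proposal is correct and follows essentially the same route as the paper: the reflection is shown to preserve $L$ via integrality and $r^2=2$, and transitivity is obtained by combining the obvious isometries (which by lemma~\ref{lem-roots-generate-L} leave at most three orbits) with reflections linking the three representatives. The only cosmetic difference is that the paper packages the linking step as a $W(A_2)\iso S_3$ Weyl-group argument using the inner product $-1$ between pairs of representatives, whereas you compute the reflected images explicitly---both computations check out.
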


\begin{proof}
The reflection negates $r$ and fixes $r^\perp$ pointwise.  It also
preserves $L$, since $\hip{x}{r}\in\curlyO$ and $r^2=2$.  Therefore it is
an isometry of $L$.  To show transitivity on roots, note that
$(\lambda,1,1)$ has inner product $-1$ with $(\lambdabar,\lambdabar,0)$.  It
follows that they are simple roots for a copy of the $A_2$
root system.  Since the Weyl group $W(A_2)\iso S_3$ generated by their
reflections acts transitively on the $6$ roots of $A_2$, these two
roots are equivalent.  The same argument shows that $(\lambda,1,1)$ is
equivalent to $(0,0,-2)$.  Together with obvious isometries, this
proves transitivity.
\end{proof}

\begin{corollary}
\label{cor-Aut-L-is-L32}
$\Isom L$ is isomorphic to $L_3(2)\times\{\pm1\}$ and acts on $L/\lambda L$
as $\GL(L/\lambda L)$.  
\end{corollary}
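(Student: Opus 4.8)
The plan is to analyze the reduction homomorphism $\rho\colon\Isom L\to\GL(L/\lambda L)$. Since $\curlyO/\lambda\curlyO\iso\F_2$ and $L$ is free of rank~$3$ over $\curlyO$, the quotient $L/\lambda L$ is a $3$-dimensional $\F_2$-vector space, so $\GL(L/\lambda L)\iso L_3(2)$ has order~$168$. I would prove that $\rho$ is onto (which is exactly the assertion that $\Isom L$ acts as the full group $\GL(L/\lambda L)$) and that $\ker\rho=\{\pm1\}$, and then deduce the direct product structure from a splitting argument. Two remarks are immediate: $\Isom L$ is finite, being the isometry group of a positive definite Hermitian lattice; and $\{\pm1\}\sset\ker\rho$, because $-1$ acts as $x\mapsto-x=x$ on the characteristic-$2$ space $L/\lambda L$.

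The heart of the matter is the computation of $\ker\rho$, and this is where I expect the main difficulty. Given $g\in\ker\rho$ I would write $g=I+\lambda h$ for some $\curlyO$-linear endomorphism $h$ of $L$, which is legitimate because $(g-I)L\sset\lambda L$. Expanding the isometry identity $\hip{gx}{gy}=\hip{x}{y}$ gives, for all $x,y\in L$,
\[
\lambda\hip{hx}{y}+\lambdabar\hip{x}{hy}+\lambda\lambdabar\hip{hx}{hy}=0.
\]
Reducing this modulo $\lambdabar$, where $\lambdabar\cong0$, $\lambda\lambdabar=2\cong0$ and $\lambda\cong1$ in $\curlyO/\lambdabar\iso\F_2$, leaves $\hip{hx}{y}\cong0$ mod $\lambdabar$ for all $x,y$; since $L$ is unimodular this forces $hx\in\lambdabar L$ for every $x$, i.e.\ $h\cong0$ mod $\lambdabar$. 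Hence $g=I+\lambda h\cong I$ mod $2L$. Now $g$ is a finite-order isometry trivial on $L/2L$, so Siegel's argument as in the proof of Lemma~\ref{lem-finite-subgroups-of-GL3Z2} shows $g^2=I$, and Lemma~\ref{lem-lattice-is-sum-of-eigenlattices} splits $L$ orthogonally into its $\pm1$-eigenlattices $L=L_+\oplus L_-$. Were both summands nonzero, unimodularity of $L$ would make each an integral unimodular positive definite Hermitian $\curlyO$-lattice, and the rank-$1$ summand $\curlyO v$ would satisfy $v^2=1$, contradicting that the minimal norm of $L$ is~$2$. So one eigenlattice vanishes and $g=\pm I$, proving $\ker\rho=\{\pm1\}$.

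For surjectivity I would use the reflections supplied by Lemma~\ref{lem-roots-of-L-give-reflections}. Since $r^2=2$, the reflection in a root $r$ is $x\mapsto x-\hip{x}{r}r$, which reduces modulo $\lambda$ to $\bar x\mapsto\bar x+\ell_r(\bar x)\bar r$, where $\ell_r(\bar x):=\hip{x}{r}$ mod $\lambda$ is a well-defined $\F_2$-linear functional killing $\bar r$ (because $\hip{r}{r}=2\cong0$). As $r\notin\lambda L$ (else $r=\lambda s$ would give $s^2=1$) we have $\bar r\neq0$, and unimodularity makes $\ell_r$ nonzero, so $\rho$ carries each reflection to a nontrivial transvection; in particular $\mathrm{im}(\rho)$ contains an element of order~$2$. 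On the other hand $\Isom L$ permutes the $21$ root lines transitively, and $\{\pm1\}=\ker\rho$ fixes each of them, so $\mathrm{im}(\rho)$ acts transitively on these $21$ lines and $21$ divides $|\mathrm{im}(\rho)|$. The only subgroups of $L_3(2)$ whose order is divisible by~$21$ have order $21$ or $168$; since $|\mathrm{im}(\rho)|$ is even it must be $168$, so $\rho$ is onto and $\Isom L$ acts on $L/\lambda L$ as all of $\GL(L/\lambda L)$.

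Finally the extension $1\to\{\pm1\}\to\Isom L\xrightarrow{\rho}L_3(2)\to1$ is central, and the only central extensions of $L_3(2)$ by a group of order~$2$ are the direct product $L_3(2)\times\{\pm1\}$ and the nonsplit cover $2\cdot L_3(2)\iso\SL_2(7)$. The latter has a unique involution, namely its center, whereas $\Isom L$ contains the reflections, which are non-central involutions (a reflection negates $r$ but fixes the nonzero space $r^\perp$, so it is neither $I$ nor $-I$). This would force $\Isom L\iso L_3(2)\times\{\pm1\}$ and complete the proof.
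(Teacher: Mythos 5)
Your proof is correct, but it takes a genuinely different route from the paper's. The paper splits off $\{\pm1\}$ at the very start via the determinant: since $\curlyO^*=\{\pm1\}$ and $-1$ has determinant $-1$, one has $\Isom L=\Isom^+\!L\times\{\pm1\}$, where $\Isom^+\!L$ is the determinant-one subgroup. It then handles injectivity and surjectivity of $\Isom^+\!L\to\GL(L/\lambda L)$ in a single counting stroke: transitivity on the $42$ roots (lemma~\ref{lem-roots-of-L-give-reflections}) makes $|\Isom^+\!L|$ divisible by~$7$, the $24$ determinant-one signed permutations give divisibility by~$24$, so $|\Isom^+\!L|\geq168$; this order rules out cases \eqref{item-G-is-S4-acting-nonfaithfully} and \eqref{item-G-is-at-most-12} of lemma~\ref{lem-finite-subgroups-of-GL3Z2}, so $\Isom^+\!L$ injects into $\GL(L/\lambda L)\iso L_3(2)$, and $168=168$ forces this injection to be an isomorphism. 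You instead compute $\ker\rho$ directly (the $g=I+\lambda h$ computation with unimodularity, then Siegel's argument and lemma~\ref{lem-lattice-is-sum-of-eigenlattices} together with the minimal norm~$2$ from lemma~\ref{lem-roots-generate-L}), prove surjectivity from the transvection images of the reflections plus transitivity on the $21$ antipodal root pairs and the subgroup structure of $L_3(2)$, and settle the group structure by the Schur-multiplier dichotomy ($L_3(2)\times2$ versus $\SL_2(7)$) using the non-central involutions supplied by reflections. Both arguments are sound; the trade-offs are instructive. The paper's version is shorter because the determinant splitting makes your entire last paragraph unnecessary (indeed, even in your setup you could replace the extension-theoretic step by one line: $\rho$ restricted to the determinant-one subgroup misses $-1$, hence is injective, giving the direct product), and because lemma~\ref{lem-finite-subgroups-of-GL3Z2} delivers injectivity for free once the order is known. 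Your kernel computation, on the other hand, is more self-contained and more general: it shows that for any integral unimodular positive definite Hermitian $\curlyO$-lattice of minimal norm at least~$2$, reduction mod $\lambda$ has kernel exactly $\{\pm1\}$, without exhibiting a single explicit isometry; the price is importing two external facts about $L_3(2)$ (that its only subgroups of order divisible by $21$ have order $21$ or $168$, and that its unique nonsplit central $\Z/2$-extension is $\SL_2(7)$ with its lone involution), whereas the paper needs only $|L_3(2)|=168$ at this point. Two small points of hygiene in your write-up: lemma~\ref{lem-lattice-is-sum-of-eigenlattices} is stated for $\Z_2$-lattices, so you should remark that its proof applies verbatim to the $\curlyO$-lattice $L$ (only division by $2$ is used); and before multiplying determinants of the eigenlattices you should note that $L_+\perp L_-$, which is immediate since $g$ is an isometry.
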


\begin{proof}
Since $\curlyO^*=\{\pm1\}$, $\Isom L$ is the product of its determinant~$1$
subgroup $\Isom^+\!L$ and $\{\pm1\}$.  Now, $\Isom^+\!L$ has order
divisible by $7$ by transitivity on the 42 roots
(lemma~\ref{lem-roots-of-L-give-reflections}), and also contains $24$
obvious isometries.  So it has order at least~$168$.
Since $L/\lambda L= (L\tensor\Z_2)/(2L\tensor\Z_2)$, 
lemma~\ref{lem-finite-subgroups-of-GL3Z2} shows that $\Isom^+\!L$ injects into
$\GL(L/\lambda L)\iso L_3(2)$ (since cases \eqref{item-G-is-S4-acting-nonfaithfully} and
\eqref{item-G-is-at-most-12} cannot apply).  Since $L_3(2)$ has
order~$168$, the injection is an isomorphism.
\end{proof}

Now we turn to defining a Hermitian $\curlyO$-lattice $M$ which we will
recognize after lemma~\ref{lem-basic-properties-of-M} as a copy of
Mumford's (see \cite[p.~240]{Mumford}).  It turns out that everything
about it is best understood by embedding it in $L$.  So even though
our aim is to understand $M$, we will develop some further properties
of $L$.

We use the term ``frame'' to refer to either a nonzero element of
$L/\lambda L$ or the set of roots mapping to it modulo~$\lambda$.
Since $\Isom L$ acts on $L/\lambda L\iso\F_2^3$ as $\GL(L/\lambda L)$,
it acts transitively on frames, so each frame has $42/7=6$ roots.  The
standard frame means
$\{(\pm2,0,0),\discretionary{}{}{}(0,\pm2,0),\discretionary{}{}{}(0,0,\pm2)\}$.
The language ``frame'' reflects the fact that each frame consists of 3
mutually orthogonal pairs of antipodal vectors.  The stabilizer of the
standard frame is exactly the group of obvious isometries.

\begin{lemma}
\label{lem-norm-3-and-7-vectors-in-L}
$L$ has $56$ norm~$3$ and $336$ norm~$7$ vectors, and $\Isom L$ is
transitive on each set.  
\end{lemma}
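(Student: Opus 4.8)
The plan is to enumerate the two vector sets directly, organizing the bookkeeping around the two reductions $\curlyO\to\curlyO/\lambda\iso\F_2$ and $\curlyO\to\curlyO/\lambdabar\iso\F_2$, and exploiting the obvious isometries. Writing $\alpha=s+t\lambda$, one has $\alpha\equiv s$ mod $\lambda$ and $\alpha\equiv s+t$ mod $\lambdabar$, so membership $(x_1,x_2,x_3)\in L$ says exactly that all $x_i$ agree mod $\lambdabar$ and that $\sum x_i\equiv0$ mod $\lambda$. A norm-$n$ vector satisfies $\sum_i\alpha_i\bar\alpha_i=2n$, and because $\curlyO$ represents none of $3,5,6$, represents $7$ only by $\pm\sqrt{-7}$, etc., the multiset of coordinate norms is highly constrained. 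First I would tabulate, for each relevant norm value, the elements of $\curlyO$ realizing it together with their image pair $(\alpha\bmod\lambdabar,\ \alpha\bmod\lambda)$; then each count reduces to listing admissible multisets and imposing the two congruences, with the $48$ signed permutations (all of which preserve $L$) fixing a normal form.

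For the counting step: for norm $3$, $\sum\alpha_i\bar\alpha_i=6$ forces coordinate norms in $\{0,1,2,4\}$, hence multiset $\{2,2,2\}$, $\{4,1,1\}$ or $\{4,2,0\}$; imposing the congruences gives the $8$ vectors $(\pm\lambda,\pm\lambda,\pm\lambda)$ from the first, and $24$ from each of the others, for $56$ in all. For norm $7$, $\sum\alpha_i\bar\alpha_i=14$ and the admissible multisets are $\{14,0,0\}$, $\{7,7,0\}$, $\{11,2,1\}$, $\{9,4,1\}$, $\{8,4,2\}$. The first two die immediately: a $\{14,0,0\}$ vector would need its nonzero entry divisible by both $\lambda$ and $\lambdabar$, hence by $2$, making its norm divisible by $4$; and norm-$7$ entries are $\equiv1$ mod $\lambdabar$, so cannot agree with a zero entry. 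The surviving multisets contribute $96$, $48$ and $192$, totalling $336$.

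For transitivity: in the norm-$3$ case the obvious isometries already act transitively on each multiset-family (in each, the non-unit entries form a \emph{single} antipodal pair), so it remains only to merge the three families, and reflecting the representatives $(\lambda,\lambda,\lambda)$ and $(\lambda-1,\lambdabar,0)$ in the root $(\lambda,1,1)$ carries both into the $\{4,1,1\}$-family, giving one orbit of size $56$. The norm-$7$ case is harder because the obvious isometries are no longer transitive on a multiset-family (e.g.\ the norm-$11$ entry ranges over two antipodal pairs). Here I would use reflections essentially: I plan to exhibit explicit reflections in roots of the $(\lambda,1,1)$-type that connect the two pieces of the $\{11,2,1\}$-family and carry the $\{11,2,1\}$- and $\{9,4,1\}$-representatives into the $\{8,4,2\}$-family, so that together with the obvious isometries everything collapses to a single orbit. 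Since we already know $\Isom L\iso L_3(2)\times\{\pm1\}$ contains the reflection in every root and is transitive on roots, a cleaner alternative—if the bookkeeping inside the $192$-element $\{8,4,2\}$-family becomes unwieldy—is to combine the count with orbit--stabilizer: produce one norm-$7$ vector whose $\Isom L$-stabilizer is trivial, whence its orbit has all $336$ elements and the action is in fact regular.

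The main obstacle is precisely this norm-$7$ transitivity. The failure of the obvious isometries to be transitive on a multiset-family means the non-obvious isometries must be controlled, and the large $\{8,4,2\}$-family (several obvious-isometry orbits) is the awkward case. Moreover the trivial-stabilizer shortcut is itself delicate: a primitive cube root of unity lies in $\Q(\sqrt{-3})$, not $\Q(\lambda)$, so an order-$3$ element of $\Isom^+L$ has a $1$-dimensional fixed space over $\Q(\lambda)$ (and order-$4$ elements likewise have a $\pm1$-eigenline), so ruling out a nontrivial stabilizer is not automatic and would require either the explicit reflection-merging above or a computation of the rank-$2$ complement $v^\perp\cap L$ and its isometry group.
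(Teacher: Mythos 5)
Your count is correct---I verified the admissible multisets, the eliminations of $\{14,0,0\}$ and $\{7,7,0\}$, the tallies $8+24+24=56$ and $96+48+192=336$, and that your two reflections in $(\lambda,1,1)$ really do carry $(\lambda,\lambda,\lambda)$ and $(\lambdabar^2,\lambdabar,0)$ into the $\{4,1,1\}$-family---but your route differs from the paper's exactly at the point you identify as the main obstacle. The paper organizes the enumeration by the single reduction mod $\lambda$ rather than by coordinate-norm multisets: odd-norm vectors lie outside $\lambda L$, so each one represents a frame (a nonzero element of $L/\lambda L\iso\F_2^3$), and by corollary~\ref{cor-Aut-L-is-L32}, already in hand at this point, $\Isom L$ acts on $L/\lambda L$ as the full $\GL(L/\lambda L)$ and hence transitively on the $7$ frames. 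One then classifies only the vectors representing the standard frame: their coordinates are all divisible by $\lambda$ and (the norm being odd) none by $\lambdabar$, so $x=\lambda(a,b,c)$ with $a,b,c\in\{\pm1,\pm\lambda,\pm\lambda^2\}$ and $|a|^2+|b|^2+|c|^2=3$ or $7$, giving $8$ resp.\ $48$ representatives that visibly form a single orbit under the $48$ obvious isometries (for norm~$7$ even a simply transitive one, since the entries have the distinct norms $1,2,4$). Frame-transitivity then yields the counts $7\cdot8=56$ and $7\cdot48=336$ and both transitivity claims in one stroke. This dissolves precisely your norm-$7$ difficulty: the frame decomposition cuts across your multiset-families, and within a single frame the obvious isometries suffice, so no reflections need to be exhibited at all; it also vindicates your orbit--stabilizer fallback after the fact, since simple transitivity on the $48$ representatives of each frame shows the action on the $336$ norm-$7$ vectors is regular. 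As written, though, your norm-$7$ transitivity is still only a plan---the merging reflections are promised rather than produced, and the trivial-stabilizer computation is not carried out---so to finish along your lines you would either have to supply those reflections explicitly or, more efficiently, rerun your congruence bookkeeping modulo $\lambda$ alone, which is in substance the paper's argument; what your version buys in exchange is self-containedness (it never invokes corollary~\ref{cor-Aut-L-is-L32}) and explicit coordinate lists of all the vectors.
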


\begin{proof}
Every element of $\lambda L$ has even norm, so the norm~$3$ and~$7$
vectors lie outside $\lambda L$.  We will find all $x\in L$ of norms~$3$
and $7$ that represent the standard frame.  Representing the standard frame implies
that $\lambda$ divides $x$'s coordinates.  Now, either $\lambdabar$ divides
all the coordinates or none; if it divides all then so does $2$
and $x^2=\hbox{odd}$ is impossible.  So $\lambdabar$ divides none.
Writing $x=\lambda(a,b,c)$ we have $a,b,c\in\curlyO-\lambdabar\curlyO$ with
$|a|^2+|b|^2+|c|^2=\hbox{$3$ or $7$}$.  The possibilities for
$(a,b,c)$ are very easy to work out, using the fact that the only
elements of $\curlyO-\lambdabar\curlyO$ of norm${}<7$ are $\pm1,\pm\lambda,\pm\lambda^2$.
The result is that there are $8$ (resp.~$48$) vectors of norm~$3$
(resp.~$7$) that represent the standard frame, all of them equivalent
under obvious isometries.  By transitivity on frames, $L$ contains
$7\cdot8$ (resp.\ $7\cdot48$) norm~$3$ (resp.~$7$) vectors and $\Isom
L$ is
transitive on them.
\end{proof}

\begin{remarks}
The proof shows that each norm~$3$ resp.~$7$ vector $x$ has a unique
description as $\lambdabar^{-1}(e+e'+e'')$
resp.\ $\lambdabar^{-1}(e+\lambda e'+\lambda^2 e'')$, where $e$, $e'$,
$e''$ are mutually orthogonal roots of the frame represented by $x$.
The same argument proves transitivity on the~$168$ norm~$5$ vectors,
each of which has a unique description as $\lambdabar^{-1}(e+\lambda
e'+\lambda e'')$.  One can also check that every norm~$4$ (resp.~$6$)
vector lies in just one of $\lambda L$ and $\lambdabar L$, so there
are $42+42$ (resp. $56+56$) of them.  It turns out that $L$ admits an
anti-linear isometry, namely the map $\beta_L$ from
section~\ref{sec-Gamma-L-and-Gamma-M} followed by complex conjugation.
Enlarging $\Isom L$ to include anti-linear isometries gives a group
which is transitive on the vectors of each norm $2,\dots,7$.
\end{remarks}

We write $\theta$ for
$\lambda-\lambdabar=\sqrt{-7}$.   We define $s$ to be the norm
seven vector $(\lambda,-\lambda^2,\lambda^3)$ and
$M:=\set{x\in L}{\hbox{$\hip{x}{s}\cong0$ mod $\theta$}}$.  By
lemma~\ref{lem-norm-3-and-7-vectors-in-L}, using any other norm~$7$
vector in place of $s$ would yield an isometric Hermitian lattice.
To understand $M$ we use the fact that it is the preimage of a
hyperplane in $L/\theta L\iso\F_7^3$.  Since $L$ is unimodular,
$\hip{}{}$ reduces to a nondegenerate symmetric bilinear form on
$L/\theta L$.  Since vectors of norm${}\leq3$ are too close together to be
congruent mod~$\theta$, the roots (resp.\ norm~$3$ vectors) represent
$42$ (resp.~$56$) distinct elements of $L/\theta L$.  These correspond
to the $21$ ``minus'' points (resp. $28$ ``plus'' points) of
$\P(L/\theta L)$, which in ATLAS terminology \cite[p.~xii]{ATLAS} means
the nonisotropic points orthogonal to no (resp. some) isotropic
point.  Lurking behind the scenes here is that $\Isom
L\iso2\times L_3(2)$ has index~$2$ in the full isometry group
$2\times\PGL_2(7)$ of $L/\theta L$.  By the transitivity of $\Isom L$
on the $|\P^1\F_7|\cdot|\F_7^*|=48$ isotropic vectors, each is
represented by $336/48=7$ norm~$7$ vectors.

Now, $M$ is the preimage of $\shat^\perp\sset L/\theta L$,
where the hat means the image mod~$\theta$.  It follows that the
subgroup of $\Isom L$ preserving $M$ is $2\times F_{21}$.  The reason we
chose the sign on the second coordinate of $s$ is so that $M$ is
preserved by the cyclic permutation of coordinates, rather than some
more complicated isometry of order~$3$.  To check this, one just
computes
$$
\bighip{(\lambda,-\lambda^2,\lambda^3)}{(\lambda^3,\lambda,-\lambda^2)}
\cong 0\mod\theta
$$
and uses the fact that in 
a $3$-dimensional nondegenerate inner product space, isotropic vectors are
orthogonal if and only if they are proportional.  
%(Remark: The other isotropic subspace of $L/\theta L$ fixed by cyclic
%permutation is represented by $(\lambda,\lambda^3,-\lambda^2)$.)

\begin{lemma}
\label{lem-basic-properties-of-M}
$M$ contains no roots of $L$ and exactly $42$ norm~$7$ vectors. It
contains 
exactly $14$ norm~$3$ vectors, namely
the images of 
$$
e_1=(-\lambdabar^2,-\lambdabar,0)
\qquad
e_2=(\lambda,\lambda,\lambda)
\qquad
e_3=(1,\lambda^2,1)
$$
under $\generate{\hbox{cyclic permutation},-1}\iso\Z/6$.  These three
vectors form a basis for $M$, with inner product matrix
$$
\begin{pmatrix}
3&\lambdabar&\lambdabar\\
\lambda&3&\lambdabar\\
\lambda&\lambda&3
\end{pmatrix}.
$$
\end{lemma}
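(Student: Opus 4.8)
The plan is to separate the lemma's two kinds of content: the three enumerations (of roots, norm~$7$, and norm~$3$ vectors in $M$), which I would extract from the geometry of $\shat^\perp$ inside $L/\theta L$, and the explicit basis with its Gram matrix, which is a direct computation finished off by a determinant comparison.

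First I would note that $\shat$ is isotropic, since $s^2=7\cong0$ mod~$\theta$, and that $x\in M$ precisely when $\hat x\in\shat^\perp$. The absence of roots is then immediate: the roots reduce to the $21$ minus points, which are orthogonal to no isotropic point, so none of them is orthogonal to $\shat$ and none lies in $M$.

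Next I would examine the form restricted to $\shat^\perp$. Since $\shat$ is isotropic and lies in $\shat^\perp$, this restriction is degenerate with radical $\spanof{\shat}$ and anisotropic quotient; writing $\shat^\perp=\spanof{\shat,w}$ with $w$ nonisotropic, every vector $a\shat+bw$ has norm $b^2w^2$. Hence the only isotropic vectors of $\shat^\perp$ are the $6$ nonzero multiples of $\shat$, while the remaining $42$ nonzero vectors form $7$ nonisotropic projective points, each orthogonal to the isotropic point $[\shat]$ and therefore a plus point. For the norm~$7$ count, every norm~$7$ vector reduces to a nonzero isotropic vector, and each such vector is the reduction of exactly $7$ norm~$7$ vectors; those of $M$ are the ones reducing to a multiple of $\shat$, giving $6\cdot7=42$. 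For the norm~$3$ count, the norm~$3$ vectors biject two-to-one with the plus points, and all $7$ nonisotropic points of $\shat^\perp$ are plus points, so $M$ contains $2\cdot7=14$ of them.

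It remains to pin down these $14$ vectors and the basis. I would verify directly that $\hip{e_i}{s}\cong0$ mod~$\theta$, so $e_1,e_2,e_3\in M$, and compute the displayed Gram matrix, whose diagonal shows they have norm~$3$. As the cyclic permutation and $-1$ preserve both $M$ and the norm, the entire $\Z/6$-orbit of the $e_i$ consists of norm~$3$ vectors of $M$; since $e_2=(\lambda,\lambda,\lambda)$ is fixed by the permutation its orbit has size~$2$, while those of $e_1$ and $e_3$ have size~$6$, yielding $14$ distinct vectors, which by the previous count are all of them. Finally, expanding the Gram determinant with $\lambda^2=-\lambda-2$ and $\lambda\lambdabar=2$ gives $7$; since $M$ is the preimage of a hyperplane, $[L:M]=7$ and so $\det M=7$, whence the determinant-$7$ sublattice $\spanof{e_1,e_2,e_3}\sset M$ must equal $M$. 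The step needing the most care is the third paragraph: correctly reading the isotropic/nonisotropic structure of the degenerate plane $\shat^\perp$ and matching it with the established lift-multiplicities --- seven norm~$7$ vectors per isotropic vector, two norm~$3$ vectors per plus point.
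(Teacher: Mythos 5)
Your proposal is correct and takes essentially the paper's route: you analyze $\shat^\perp\sset L/\theta L$ (radical $\spanof{\shat}$, six nonzero isotropic vectors, fourteen residues of norm~$3$), combine this with the lift multiplicities established before the lemma (one norm~$3$ vector per norm~$3$ residue, seven norm~$7$ vectors per nonzero isotropic residue), use the $\Z/6$ symmetry to account for all fourteen norm~$3$ vectors, and prove the basis claim by the same determinant comparison $\det M=7\det L=7$. The only cosmetic difference is that you derive the norm distribution on $\shat^\perp$ via the degenerate-plane structure and the plus/minus point classification, where the paper simply records that distribution directly.
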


\begin{proof}
Observe that $\shat^\perp\sset L/\theta L$ has $6$ nonzero isotropic elements
and $14$ of each norm $3\cdot(\hbox{a nonzero square in $\F_7$})$.  Therefore
$M$ contains no roots, and since each norm~$3$ element of $L/\theta L$
is represented by exactly one norm~$3$ element of $L$, $M$ has exactly
$14$ norm~$3$ vectors.  It is easy to check that the three displayed
vectors lie in $M$.  Since $M$
is preserved by cyclic permutation, they yield all $14$ norm~$3$
vectors.  

It is easy to check that $\hip{e_i}{e_j}=\lambdabar$ if $i<j$, so the
inner product matrix is as stated.  Since $L/M$ is $1$-dimensional
over $\curlyO/\theta\curlyO\iso\F_7$, we see that $\det M$ is $7$
times $\det L$.  Since the $e_i$
have inner product matrix of determinant~$7$, they form a basis for
$M$.  
\end{proof}

Since Mumford describes his Hermitian lattice in terms of a basis with
this same inner product matrix, our $M$ is a copy of it.  (Once we
suspected Mumford's lattice lay inside $L$, there was only one
candidate for it, and searching for the $e_i$'s realizing his inner
product matrix was easy.  Using the known $2\times F_{21}$ symmetry,
we could without loss take $e_2$ and $e_3$ as stated.  Then there were
just three possibilities for $e_1$.)

Modulo~$\theta$, the matrix is the all $3$'s matrix, hence has
rank~$1$.  The following result is needed for
theorem~\ref{thm-intersection-has-index-8-in-each}, on the index of
$\GAK\cap\GMu$ in $\GAK$ and $\GMu$, but not elsewhere.

\begin{lemma}
\label{lem-superlattices-of-M}
$M$ is a sublattice of exactly~$8$ unimodular Hermitian lattices, one of which
  is $L$ and the rest of which are isometric to $\curlyO^3$.  In
  particular, the isometry group of $M$ is the subgroup $2\times
  F_{21}$ of $\Isom L$ preserving $M$.
\end{lemma}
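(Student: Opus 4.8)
The plan is to list all unimodular overlattices of $M$ and then recover $\Isom M$ from the way it permutes them. Any unimodular $N\supseteq M$ is integral and equal to its own dual, so $M\sset N\sset M^*$ and $N$ is determined by the $\curlyO$-submodule $N/M$ of $M^*/M$. I would read off $M^*/M$ from the Gram matrix of lemma~\ref{lem-basic-properties-of-M}: reduced mod~$\theta$ it is the all $3$'s matrix, of rank~$1$, so the elementary divisors of the form are $1,\theta,\theta$ and $M^*/M\iso(\curlyO/\theta)^2\iso\F_7^2$. In particular $\theta M^*\sset M$, so every overlattice sits inside $\theta^{-1}M$.

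The decisive point is the type of the discriminant pairing on $M^*/M$. Its values lie in $\theta^{-1}\curlyO/\curlyO$, and because $\overline\theta=-\theta$, complex conjugation acts on this line as multiplication by $-1$. Hence the Hermitian symmetry $\hip{y}{x}=\overline{\hip{x}{y}}$ makes the induced $\F_7$-pairing \emph{alternating}: every vector is isotropic and every line equals its own perpendicular. A unimodular $N$ is precisely one with $N/M$ Lagrangian, i.e.\ an arbitrary line; so the unimodular overlattices correspond bijectively to the lines of $\F_7^2$, of which there are $|\P^1\F_7|=8$. One of them is $L$, since lemma~\ref{lem-basic-properties-of-M} puts $M$ inside the unimodular $L$.

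Next I would locate $L$ among the eight and identify the others. The $2\times F_{21}\sset\Isom L$ preserving $M$ (described just before the lemma) lies in $\Isom M$ and acts $\curlyO$-linearly on $M^*/M$, hence permutes the eight lines; it fixes the line $L/M$ because it preserves both $L$ and $M$. An element of order~$7$ in $\PGL_2(\F_7)$ fixes one point of $\P^1\F_7$ and cycles the remaining seven, so the $C_7\sset F_{21}$ already permutes the other seven overlattices in a single orbit. They are therefore mutually isometric, and a one-time computation of the norm~$2$ vectors in $\theta^{-1}M$ representing one such line shows it is isometric to $\curlyO^3$, and transitivity gives this for all seven. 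The same computation shows the line $L/M$ is the only one carrying the full set of $42$ roots of lemma~\ref{lem-roots-generate-L}, so $L\not\iso\curlyO^3$ and $L$ is the unique overlattice of its isometry type.

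With uniqueness in hand the group computation is immediate. Any $g\in\Isom M$ permutes the eight overlattices preserving isometry type, so it must fix the unique copy of $L$; thus $gL=L$ and $gM=M$, which places $g$ in the stabiliser of $M$ inside $\Isom L$, namely $2\times F_{21}$. Since the reverse inclusion was noted above, $\Isom M=2\times F_{21}$. The one genuinely computational step, which is the main obstacle, is the enumeration of the norm~$2$ vectors in $\theta^{-1}M$ line by line: it simultaneously shows the seven non-$L$ overlattices are copies of $\curlyO^3$ and singles out $L$ by its $42$ roots, while everything else is formal once the alternating nature of the discriminant form is recognised.
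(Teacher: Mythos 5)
Your count of the eight unimodular overlattices is correct and is essentially the paper's argument in dual form: the paper parametrizes index-$7$ unimodular superlattices by the isotropic lines in $M/\theta M$, where the reduced form has rank~$1$, while you work with the discriminant form on $M^*/M\iso\F_7^2$; your observation that $\bar\theta=-\theta$ forces this form to be alternating, so that \emph{every} line is Lagrangian, is a clean equivalent route to the count $|\P^1\F_7|=8$. Your closing step is also exactly right (the paper leaves it implicit): once $L$ is the unique overlattice of its isometry type, every isometry of $M$ must preserve $L$, hence lies in the stabilizer $2\times F_{21}$ of $M$ in $\Isom L$.

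The genuine gap is in your identification of the seven non-$L$ overlattices, in two places. First, your transitivity claim silently assumes that the $C_7\sset F_{21}$ acts \emph{nontrivially} on $M^*/M$: the fixed-point count for order-$7$ elements of $\PGL_2(\F_7)$ applies only once you know the image of $C_7$ in $\GL(M^*/M)$ has order~$7$ rather than~$1$, and you give no argument that $C_7$ avoids the kernel of the reduction map. This is true but not free: the natural way to see it is that $\Isom(\curlyO^3)$, of order~$48$, has no $7$-torsion, so $C_7$ can fix no overlattice other than $L$ --- which presupposes the very identification you are using transitivity to prove. (A direct matrix computation mod~$\theta$ would repair this, but you have not set one up.) Second, your deferred ``one-time computation'' is mis-specified: enumerating \emph{norm-$2$} vectors cannot certify $L'\iso\curlyO^3$, since root counts alone do not determine a unimodular Hermitian lattice without a classification result; what you need are the six \emph{norm-$1$} vectors of $L'$, each of which spans an orthogonal $\curlyO$-summand (the paper's parenthetical remark). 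The paper sidesteps both issues with a computation-free averaging argument: the $42$ norm-$7$ vectors of $M$ reduce into the seven lines other than the one for $L$ (since $L$ has no norm-$1$ vectors), each such line receives at most~$6$ of them because $v\mapsto v/\theta$ converts them injectively into norm-$1$ vectors of $L'$, and $42/7=6$ forces exactly~$6$ per line, whence each $L'\neq L$ is a copy of $\curlyO^3$ --- and the transitivity you wanted then follows as a corollary rather than serving as an input.
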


\begin{proof}
Since $\det M=7$, any unimodular Hermitian superlattice $L'$ contains
it of index~$7$.  So $L'$ corresponds to a $1$-dimensional subspace
$S'$ of $M/\theta M$.  That is, $L'=\spanof{M,\frac{1}{\theta}v}$
where $v\in M$ represents any nonzero element of $S'$.  In order for
$L'$ to be integral, $S'$ must be isotropic.  Since the rank of
$\hip{}{}$ on $M/\theta M$ is~$1$, there are exactly $8$ possibilities
for $S'$, hence $8$ unimodular Hermitian superlattices $L'$.  One of
these is $L$; write $S$ for its corresponding line in $M/\theta M$.
Note that $S$ contains (the reductions mod~$\theta$ of) no norm~$7$
vectors, because $L$ has no norm~$1$ vectors.  Each $S'\neq S$
contains (the reductions mod~$\theta$ of) at most $6$ norm~$7$
vectors, because $L'$ contains a norm~$1$ vector for every norm~$7$
vector of $M$ projecting into $S'$.  (Each norm~$1$ vector spans a
summand, so $L'$ can have at most $6$ norm~$1$ vectors, and if it has
$6$ then it is a copy of $\curlyO^3$.)  Since $M$ has $42$ norm~$7$ vectors
(lemma~\ref{lem-norm-3-and-7-vectors-in-L}), the subspaces $S'\neq S$
have on average $42/7=6$ (images of) norm~$7$ vectors.  It follows
that each has exactly~$6$, and that each $L'\neq L$ is a copy of $\curlyO^3$.
\end{proof}

\section{The isometry groups $\GAK$ and $\GMu$ of $L$ and $M$ over $\Zhalf$}
\label{sec-Gamma-L-and-Gamma-M}

In this section we regard the isometry groups of $L$ and $M$ as group
schemes over $\Z$ and study the groups $\GAK$ and $\GMu$ of points
over $\Zhalf$.  
$\GMu$ is Mumford's $\Gamma_{\!1}$.
We
show that  $\PGAK$ and $\PGMu$ are densest
possible lattices in $\PGL_3(\Q_2)$, and find generators for them.
We also show by an independent argument that
$\GAK\cap\GMu$ has index~$8$ in  $\GAK$ and $\GMu$.

Regarding $\Isom L$ and $\Isom M$ as group schemes amounts to the
following.  If $A$ is any commutative ring then the $A$-points of
$\Isom L$ are the $(\curlyO\tensor_\Z A)$-linear transformations of
$L\tensor_\Z A$ that preserve the unique $(\curlyO\tensor_\Z
A)$-sesquilinear extension of $\hip{}{}$.  And similarly for $\Isom
M$.  The groups of $\Z$-points of these group schemes are the finite
groups  $\Isom L\iso L_3(2)\times2$ and $\Isom M\iso
F_{21}\times 2$ from the previous section.  We define $\GAK$ and $\GMu$
to be their groups of $\Zhalf$-points.  By the theory of arithmetic
groups \cite[p.~1]{Margulis}, their central quotients $\PGAK$ and $\PGMu$ are
lattices in $\PGL_3(\Q_2)$.  Actually all we need from the general
theory is discreteness, which is the easy part; cocompactness is
part of theorems~\ref{thm-PGamma-M} and~\ref{thm-PGamma-L}.

We write $\Lhalf$ for the Hermitian $\curlyOhalf$-lattice $L\tensor_{\curlyO}\curlyOhalf$ and similarly for $\Mhalf$.
It is easy to find extra isometries of $\Lhalf$ and $\Mhalf$.  For $L$
we observe that the roots $(2,0,0)$, $(0,\lambdabar,\lambdabar)$ and
$(0,\lambdabar,-\lambdabar)$ are mutually orthogonal.  Therefore the
isometry sending the roots $(2,0,0)$, $(0,2,0)$, $(0,0,2)$ of the
standard frame to them lies in $\GAK$.  We call this transformation
$\beta_L$, namely
$$
\beta_L=\begin{pmatrix}
1&0&0\\
0&1/\lambda&1/\lambda\\
0&1/\lambda&-1/\lambda\\
\end{pmatrix}
$$ Similarly, the cyclic permutation $e_1\to e_2\to e_3\to e_1$ does
not quite preserve inner products in $M$, because $\hip{e_2}{e_1}$ and
$\hip{e_3}{e_1}$ are $\lambda$ not $\lambdabar$.  However, this can be
fixed up by multiplying by $\lambda/\lambdabar$.  That is, $\GMu$
contains the isometry $\beta_M:e_1\to e_2\to e_3\to
(\lambda/\lambdabar)e_1$.  This is a slightly more convenient isometry
than Mumford's $\rho$ \cite[p.~241]{Mumford}, and appears in 
\cite[p.~639]{Kato-Shimura-variety}.  Its cube is obviously
the scalar $\lambda/\lambdabar$.

The next two theorems refer to the normalization of Haar measure on
$\PGL_3(\Q_2)$ introduced in section~\ref{sec-finite-and-discrete-subgroups}, namely the one for which
$\PGL_3(\Z_2)$ has mass~$1$.

\begin{theorem}
\label{thm-PGamma-M}
$\PGMu$ is a lattice in $\PGL_3(\Q_2)$ of covolume~$1/21$ and is
generated by $F_{21}$ and $\beta_M$.  It acts transitively on the
vertices, edges and $2$-simplices of $\B$, with stabilizers $F_{21}$,
$\Z/3$ and $\Z/3$ respectively.
\end{theorem}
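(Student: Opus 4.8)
The plan is to let $\PGMu$ act on $\B$ through the embedding $\curlyO\hookrightarrow\Z_2$ and bootstrap from one vertex. Fix the vertex $v_0$ of $\B$ represented by the $\Z_2$-lattice $M_\lambda:=M\tensor_\curlyO\Z_2\subset V:=M\tensor_\curlyO\Q_2$, using our embedding $\curlyO\hookrightarrow\Z_2$ (so $V\iso\Q_2^3$ and $\GMu$ acts $\Q_2$-linearly on $V$). Since $\Isom M\iso F_{21}\times\{\pm1\}$ (lemma~\ref{lem-superlattices-of-M}) preserves $M$ and hence $M_\lambda$, it fixes $v_0$; modulo $\{\pm1\}$ this yields $F_{21}\subseteq\mathrm{stab}_{\PGMu}(v_0)$. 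A short computation using $\beta_M^3=\lambda/\lambdabar$—a scalar that maps to twice a unit of $\Z_2$—then shows that $\beta_M$ has order~$3$ in $\PGMu$, that $\beta_M M_\lambda$ has index~$2$ in $M_\lambda$, and that $v_0$, $v_1:=\beta_M v_0$, $v_2:=\beta_M^2 v_0$ are pairwise adjacent and cyclically permuted by $\beta_M$; they therefore span a $2$-simplex, with $v_1$ corresponding to a line and $v_2$ to a point of $\P(M_\lambda/2M_\lambda)\iso\P^2\F_2$.

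For transitivity on vertices, recall that the link of $v_0$ is the incidence graph of $\P^2\F_2$, on which $\mathrm{stab}(v_0)$ acts through $\GL(M_\lambda/2M_\lambda)\iso L_3(2)$. The order-$7$ subgroup of $F_{21}$ acts simply transitively on the $7$ points and on the $7$ lines, so $F_{21}$ is transitive on the $7$ point-neighbors and on the $7$ line-neighbors of $v_0$. As $v_1$ and $v_2$ lie in the orbit $\Delta v_0$, where $\Delta:=\langle F_{21},\beta_M\rangle$, so do all $14$ neighbors of $v_0$; connectedness of the $1$-skeleton of $\B$ then propagates this outward, giving that $\Delta$ is transitive on vertices.

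The main obstacle is to show $\mathrm{stab}_{\PGMu}(v_0)$ is exactly $F_{21}$: an isometry of $\Mhalf$ fixing $M_\lambda$ need not preserve $M$, so a priori the stabilizer could be larger. By lemma~\ref{lem-initial-bounds-on-lattices-in-PGL3Q2} it is isomorphic to $F_{21}$, $S_4$ or $L_3(2)$, and it is not $S_4$ because it contains $F_{21}$. If it were $L_3(2)$, then lemma~\ref{lem-L32-actions-on-neighbors} would force $\mathrm{stab}(v_1)\iso S_4$; but $\beta_M\in\PGMu$ conjugates $\mathrm{stab}(v_0)$ onto $\mathrm{stab}(v_1)$, so the latter would be $L_3(2)\not\iso S_4$, a contradiction. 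Hence $\mathrm{stab}_{\PGMu}(v_0)=F_{21}$. Transitivity together with $|F_{21}|=21$ then gives covolume $\sum_\Sigma 1/n_\Sigma=1/21$, so $\PGMu$ is indeed a lattice; and since $\Delta\subseteq\PGMu$ is vertex-transitive and already contains the full stabilizer $F_{21}$ of $v_0$, the usual coset argument (writing $g v_0=\delta v_0$ for some $\delta\in\Delta$ forces $\delta^{-1}g\in F_{21}\subseteq\Delta$) gives $\PGMu=\Delta=\langle F_{21},\beta_M\rangle$.

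Finally I would read off the edge and chamber data. The element $\beta_M$ carries the edge $\{v_0,v_2\}$ (to a point) onto $\{v_0,v_1\}$ (to a line), while $F_{21}$ is transitive on each of the two kinds; with vertex-transitivity this shows $\PGMu$ has a single orbit of edges. Its stabilizer contains the order-$3$ stabilizer in $F_{21}$ of the line $v_1$ and can be no larger, because the color homomorphism $\PGL_3(\Q_2)\to\Z/3$, $g\mapsto v_2(\det g)\bmod 3$, assigns every element a well-defined shift of vertex-colors, and reversing an edge would require a shift equal to both $+1$ and $-1$; thus the edge stabilizer is $\Z/3$. Similarly $F_{21}$ has order prime to the order~$8$ of a flag stabilizer in $L_3(2)$, so it acts simply transitively on the $21$ flags of $\P^2\F_2$, i.e.\ on the $21$ chambers containing $v_0$, giving one orbit of $2$-simplices. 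The subgroup fixing the chamber $\{v_0,v_1,v_2\}$ pointwise is then trivial, but $\beta_M$ rotates the chamber, so its stabilizer is exactly the $\Z/3$ generated by $\beta_M$. This yields the asserted stabilizers $F_{21}$, $\Z/3$ and $\Z/3$.
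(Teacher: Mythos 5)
Your proposal is correct, and while its transitivity argument coincides with the paper's (both exploit the order-$3$ rotation $\beta_M$ of a chamber at the vertex of $M\tensor_\curlyO\Z_2$, the two $F_{21}$-orbits of neighbors, and propagation by connectedness), you close the proof by a genuinely different route. The paper finishes with a covolume comparison: $\generate{F_{21},\beta_M}$ is vertex-transitive with stabilizer of order at least $21$, so its covolume is at most $1/21$, and lemma~\ref{lem-second-bounds-on-lattices-in-PGL3Q2} then forces both it and $\PGMu$ to have covolume exactly $1/21$, whence they coincide and the stabilizers can be no larger than the visible ones. You instead pin down $\mathrm{stab}_{\PGMu}(v_0)$ directly: it is a finite subgroup containing $F_{21}$, hence isomorphic to $F_{21}$, $S_4$ or $L_3(2)$; $S_4$ is excluded because $21$ does not divide $24$; and $L_3(2)$ is excluded by the nice observation that $\beta_M\in\PGMu$ conjugates $\mathrm{stab}(v_0)$ onto $\mathrm{stab}(v_1)$, contradicting lemma~\ref{lem-L32-actions-on-neighbors}, which would force $\mathrm{stab}(v_1)\iso S_4$. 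Orbit--stabilizer then yields the covolume $1/21$, and the coset argument ($gv_0=\delta v_0$ implies $\delta^{-1}g\in F_{21}$) yields generation. What your route buys: it is independent of the minimality machinery of lemma~\ref{lem-second-bounds-on-lattices-in-PGL3Q2}, needing only the finite-subgroup classification and lemma~\ref{lem-L32-actions-on-neighbors}, and it makes explicit, via the type homomorphism $g\mapsto v_2(\det g)\bmod 3$, the edge and chamber stabilizer computations that the paper compresses into ``the structure of the other stabilizers follows''; what the paper's route buys is brevity, since lemma~\ref{lem-second-bounds-on-lattices-in-PGL3Q2} is needed anyway for the uniqueness theorem. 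Two small points of hygiene: the classification of finite subgroups of order at least $21$ is established inside the proof of lemma~\ref{lem-initial-bounds-on-lattices-in-PGL3Q2} rather than in its statement, so you should cite it as such; and in the last step, to pass from ``pointwise stabilizer of the chamber is trivial'' to ``chamber stabilizer is exactly $\Z/3$'' you should say explicitly that a reflection of the chamber would reverse an edge and is therefore excluded by your color argument --- as written, that exclusion is only implicit.
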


\begin{proof}
Consider the vertex $v$ of $\B$ corresponding to
$M_2:=M\tensor_\curlyO\Z_2$. We know that $F_{21}$ acts on $v$'s neighbors with
two orbits, corresponding to the points and lines of $\P(M/\lambda
M)=\P(M_2/2M_2)$.  On the other hand, $\beta_M$ cyclically permutes the
three (scalar classes of) $\Z_2$-lattices
$$
\spanof{e_1,e_2,e_3},
\quad
\spanof{e_1,e_2,\frac{1}{\lambda}e_3},
\quad
\spanof{e_1,\frac{1}{\lambda}e_2,\frac{1}{\lambda}e_3}.
$$ This is a rotation of order~$3$ around the center of a $2$-simplex
containing $v$.  It follows that $v$ is
$\generate{F_{21},\beta_M}$-equivalent to each of its neighbors.  Since
the same holds for the neighbors, induction proves
transitivity on vertices.  Transitivity on edges is also clear, and
transitivity on $2$-simplices follows from $F_{21}$'s transitive
action on the edges in the link of $v$.

We have shown that $\generate{F_{21},\beta_M}$ has covolume${}\leq1/21$.
Since $\PGMu$ contains it and has covolume${}\geq1/21$ by
lemma~\ref{lem-second-bounds-on-lattices-in-PGL3Q2}, these groups coincide and have covolume
exactly $1/21$.  That the vertex stabilizers are no larger than
$F_{21}$ also follows from that lemma, and the structures of the other
stabilizers follow.
\end{proof}

\begin{theorem}
\label{thm-PGamma-L}
$\PGAK$ is a lattice in $\PGL_3(\Q_2)$ of covolume $1/21$ and is
generated by $L_3(2)$ and $\beta_L$.  It acts with two orbits on vertices
of $\B$, with stabilizers $L_3(2)$ and $S_4$.
\end{theorem}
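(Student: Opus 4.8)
The plan is to follow the proof of Theorem~\ref{thm-PGamma-M}, replacing $M_2$ by the vertex $v$ represented by $L_2:=L\tensor_\curlyO\Z_2$ and allowing for two vertex orbits instead of one. Set $\Delta:=\generate{L_3(2),\beta_L}\sset\PGAK$, where $L_3(2)=\Isom^+\!L$ (Corollary~\ref{cor-Aut-L-is-L32}). First I would establish the local picture at $v$. Using $L/\lambda L=L_2/2L_2$ as in Corollary~\ref{cor-Aut-L-is-L32}, that corollary shows $L_3(2)$ fixes $v$ and acts on its link as all of $\GL(L_2/2L_2)$. In coordinates the defining conditions of $L$ reduce over $\Z_2$---since $\lambdabar$ is a unit and $\lambda$ twice a unit---to $x_1+x_2+x_3\cong0\bmod2$, so $L_2$ is the type~$l$ lattice of Lemma~\ref{lem-3-actions-of-S4} for the signed-permutation subgroup $S_4\sset L_3(2)$. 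The $\PGAK$-stabilizer of $v$ is finite, preserves $L_2$, and contains $L_3(2)$; by the classification in the proof of Lemma~\ref{lem-second-bounds-on-lattices-in-PGL3Q2} (a finite subgroup of order${}\geq21$ is $F_{21}$, $S_4$ or $L_3(2)$) it equals $L_3(2)$. Since $v$ has type~$l$ for this $S_4$, Lemma~\ref{lem-S4-actions-on-neighbors}\eqref{item-type-p-or-l-implies-type-0} produces a unique $S_4$-fixed neighbor $w=[\Z_2^3]$, of type~$0$; as $w$ neighbors the $L_3(2)$-vertex $v$, Lemma~\ref{lem-L32-actions-on-neighbors} gives $\mathrm{Stab}_{\PGAK}(w)\iso S_4$. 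Thus $\Delta$ has at least two vertex orbits, with stabilizers $L_3(2)$ and $S_4$.

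The crux is transitivity: that $\Delta$ has \emph{at most} these two vertex orbits, i.e.\ that every vertex is $\Delta$-equivalent to $v$ or $w$. Granting this, the covolume of $\Delta$ is $\frac1{168}+\frac1{24}=\frac1{21}$, so $\PGAK$ has covolume${}\leq\frac1{21}$; by Lemma~\ref{lem-second-bounds-on-lattices-in-PGL3Q2} (the vertex-transitive $F_{21}$ alternative being excluded by the $L_3(2)$-vertex $v$) its covolume is exactly $\frac1{21}$, with two orbits and stabilizers $L_3(2)$ and $S_4$, and equality of covolumes forces $\PGAK=\Delta=\generate{L_3(2),\beta_L}$.

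For transitivity I would show that $\Delta v\cup\Delta w$ is closed under passage to neighbors and invoke connectedness of $\B$. By Lemma~\ref{lem-L32-actions-on-neighbors} all $14$ neighbors of $v$ are $S_4$-vertices, and those of type~$0$ form a single $\mathrm{Stab}(v)=L_3(2)$-orbit through $w$; it then remains to pull the type~$2$ neighbors of $v$, together with the neighbors of $w$, into $\Delta v\cup\Delta w$ using $\beta_L$. This last point is the main obstacle, and it is essentially harder than in Theorem~\ref{thm-PGamma-M}. There $\beta_M$ was elliptic, an order-$3$ rotation of a $2$-simplex, so one application carried $v$ onto a neighbor and transitivity was immediate. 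Here $\beta_L$ is hyperbolic: a short computation gives $\beta_L^2=\diag(1,\lambdabar/\lambda,\lambdabar/\lambda)$, which in $\PGL_3(\Q_2)$ is the nontrivial translation $\diag(\lambda/\lambdabar,1,1)$ along the standard apartment, so $\beta_L$ carries $v$ to a vertex at distance greater than~$1$ and no single neighbor relation yields transitivity. Instead one must glide along the apartment with the translation $\beta_L^2$ while the signed permutations and the rest of $L_3(2)=\mathrm{Stab}(v)$ rotate through the transverse directions, and verify that the two neighbor-classes at $v$ and $w$ are thereby absorbed into $\Delta v\cup\Delta w$. I expect this to amount to the explicit description of the action on a fundamental region of $\B$ promised in the introduction.
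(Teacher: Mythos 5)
Your scaffolding is sound and matches the paper's own frame: the identification of $L\tensor_\curlyO\Z_2$ as the type-$l$ lattice for the signed-permutation $S_4$, the stabilizer bounds, the covolume count $\frac1{168}+\frac1{24}=\frac1{21}$, and the deduction $\PGAK=\generate{L_3(2),\beta_L}$ from lemma~\ref{lem-second-bounds-on-lattices-in-PGL3Q2} are all exactly how the paper argues. Your diagnosis of the difficulty is also correct: $\beta_L^2=\diag(1,\lambdabar/\lambda,\lambdabar/\lambda)$ is projectively the nontrivial translation $\diag(\lambda/\lambdabar,1,1)$, so $\beta_L$ fixes no vertex, and this matches the paper's description of $\beta_L$ as a glide. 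But there is a genuine gap at the one point that carries the theorem: you never verify that the remaining neighbor orbits of $v$ and $w$ are actually absorbed into $\Delta v\cup\Delta w$. You correctly identify this as the crux, note that the elliptic one-step argument of theorem~\ref{thm-PGamma-M} fails, and then write that you ``expect this to amount to the explicit description\dots promised in the introduction'' --- but that promised description \emph{is} the proof of this theorem, so the deferral is circular. The step is not automatic: a priori the glide $\beta_L$ could fail to connect the second $L_3(2)$-orbit of neighbors of $v$, or some of the six $S_4$-orbits of neighbors of $w$, to $\{v,w\}$, and nothing in your write-up performs the required finite check.

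The paper closes exactly this gap by explicit, finite geometry. It takes the subcomplex $X\sset\B$ fixed by a dihedral group $D_8\sset L_3(2)$ --- an infinite strip with triangular flaps, containing $C=v$ and $D=w$ --- and writes down $\Z_2$-bases for seven vertices $A,\dots,G$ on it; $\beta_L$ acts on $X$ by rotating about the centerline and sliding half a notch. Then: every neighbor of $C$ is $L_3(2)$-equivalent to $B$ or $D$, and $\beta_L(D)=B$; the $14$ neighbors of $D$ fall into six orbits under the frame-stabilizing $S_4$ (which acts on $\F_2^3$ through coordinate permutations), represented by $A,B,C,E,F,G$; and $\beta_L(E)=C$, while $A$, $B$, $F$, $G$ are $\generate{\beta_L}$-equivalent to $D$ because they lie along the strip on which $\beta_L$ glides. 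Connectedness of $\B$ then gives at most two orbits --- precisely your closure-under-neighbors scheme, but actually carried out. To complete your proof you must supply this (or an equivalent) explicit matrix verification. A minor further slip: the two $L_3(2)$-orbits of neighbors of $v$ are the seven point-neighbors (the orbit of $w$, since $2\Z_2^3/2L_2$ is a point of $\P(L_2/2L_2)$) and the seven line-neighbors; your phrases ``those of type~$0$'' and ``type~$2$ neighbors'' garble this, as by lemma~\ref{lem-S4-actions-on-neighbors} \emph{every} neighbor of $v$ has type~$0$ with respect to its own $S_4$-stabilizer.
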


\begin{figure}
% small unit so can use TeX registers for arithmetic
\psset{unit=.02cm}
\newcount\Ax
\newcount\Ay
\newcount\Bx
\newcount\By
\newcount\Cx
\newcount\Cy
\newcount\Dx
\newcount\Dy
\newcount\Ex
\newcount\Ey
\newcount\Fx
\newcount\Fy
\newcount\Gx
\newcount\Gy
\Ax=0
\Ay=300
\def\ADx{200}
\def\ADy{-50}
\Dx=\Ax
\Dy=\Ay
\advance\Dx by\ADx
\advance\Dy by\ADy
\Gx=\Dx
\Gy=\Dy
\advance\Gx by\ADx
\advance\Gy by\ADy
\def\ABx{50}
\def\ABy{-140}
\Bx=\Ax
\By=\Ay
\advance\Bx by\ABx
\advance\By by\ABy
\Fx=\Bx
\Fy=\By
\advance\Fx by\ADx
\advance\Fy by\ADy
\def\BCx{80}
\def\BCy{200}
\Cx=\Bx
\Cy=\By
\advance\Cx by\BCx
\advance\Cy by\BCy
\def\FEx{-70}
\def\FEy{-80}
\Ex=\Fx
\Ey=\Fy
\advance\Ex by\FEx
\advance\Ey by\FEy
%
%
%\def\ABx{1}
%\def\ABy{-2}
%\Bx=\Ax
%
\newcount\tempx
\newcount\tempy
\begin{pspicture}(-100,5)(470,380)
%\psframe(-100,5)(470,380)
% draw bottom triangles; left one needs temp coords
\pspolygon(\Dx,\Dy)(\Ex,\Ey)(\Fx,\Fy)(\Dx,\Dy)
\tempx=\Bx
\tempy=\By
\advance\tempx by\FEx
\advance\tempy by\FEy
\pspolygon(\Ax,\Ay)(\Bx,\By)(\tempx,\tempy)(\Ax,\Ay)
% white-filled horizontal triangles to hide the hidden lines.  left
% one needs temp coords
\tempx=\Bx
\tempy=\By
\advance\tempx by-\ADx
\advance\tempy by-\ADy
\pspolygon[fillstyle=solid,linecolor=white](\Bx,\By)(\Ax,\Ay)(\tempx,\tempy)(\Bx,\By)
\pspolygon[fillstyle=solid](\Bx,\By)(\Dx,\Dy)(\Fx,\Fy)(\Bx,\By)
% horizontal line at back
\qline(\Ax,\Ay)(\Gx,\Gy)
% top triangles, white to hide the back line.  right one needs temp
\pspolygon[fillstyle=solid](\Bx,\By)(\Cx,\Cy)(\Dx,\Dy)(\Bx,\By)
\tempx=\Cx
\tempy=\Cy
\advance\tempx by\ADx
\advance\tempy by\ADy
\pspolygon[fillstyle=solid](\Fx,\Fy)(\tempx,\tempy)(\Gx,\Gy)(\Fx,\Fy)
% missing edge 
\qline(\Ax,\Ay)(\Bx,\By)
% dotted lines at ends.  These two macros govern what percentage of a
% segment of a translate of AD to draw
\def\solidpercent{40}
\def\dashedpercent{30}
% dashed line from A
\tempx=-\ADx
\tempy=-\ADy
\multiply\tempx by\dashedpercent
\multiply\tempy by\dashedpercent
\divide\tempx by 100
\divide\tempy by 100
\advance\tempx by\Ax
\advance\tempy by\Ay
\psline[linestyle=dashed](\Ax,\Ay)(\tempx,\tempy)
% dashed line from G
\tempx=\ADx
\tempy=\ADy
\multiply\tempx by\dashedpercent
\multiply\tempy by\dashedpercent
\divide\tempx by 100
\divide\tempy by 100
\advance\tempx by\Gx
\advance\tempy by\Gy
\psline[linestyle=dashed](\Gx,\Gy)(\tempx,\tempy)
% dashed line from B
\tempx=\dashedpercent
\advance\tempx by\solidpercent
\tempy=\tempx
\multiply\tempx by-\ADx
\multiply\tempy by-\ADy
\divide\tempx by 100
\divide\tempy by 100
\advance\tempx by \Bx
\advance\tempy by \By
\psline[linestyle=dashed](\Bx,\By)(\tempx,\tempy)
% solid line from B, covering part of the dashed line just drawn
\tempx=\solidpercent
\tempy=\solidpercent
\multiply\tempx by-\ADx
\multiply\tempy by-\ADy
\divide\tempx by 100
\divide\tempy by 100
\advance\tempx by \Bx
\advance\tempy by \By
\psline(\Bx,\By)(\tempx,\tempy)
% dashed line from F
\tempx=\dashedpercent
\advance\tempx by\solidpercent
\tempy=\tempx
\multiply\tempx by\ADx
\multiply\tempy by\ADy
\divide\tempx by 100
\divide\tempy by 100
\advance\tempx by \Fx
\advance\tempy by \Fy
\psline[linestyle=dashed](\Fx,\Fy)(\tempx,\tempy)
% solid line from F, covering part of the dashed line just drawn
\tempx=\solidpercent
\tempy=\solidpercent
\multiply\tempx by\ADx
\multiply\tempy by\ADy
\divide\tempx by 100
\divide\tempy by 100
\advance\tempx by \Fx
\advance\tempy by \Fy
\psline(\Fx,\Fy)(\tempx,\tempy)
% nodes
\def\noderadius{5}
\qdisk(\Ax,\Ay){\noderadius}
\qdisk(\Bx,\By){\noderadius}
\qdisk(\Cx,\Cy){\noderadius}
\qdisk(\Dx,\Dy){\noderadius}
\qdisk(\Ex,\Ey){\noderadius}
\qdisk(\Fx,\Fy){\noderadius}
\qdisk(\Gx,\Gy){\noderadius}
% vertex names
\def\myput#1#2#3#4#5{\rput[#1](#2,#3){\raise#5\hbox{#4}}}
\myput{t}{\Ax}{\Ay}{$A\,\,\,\,$}{-15pt}
\myput{br}{\Bx}{\By}{$B\,\,\,$}{6pt}
\myput{t}{\Cx}{\Cy}{$C$}{-22pt}
\myput{t}{\Dx}{\Dy}{$D\,\,\,\,$}{-15pt}
\myput{tl}{\Ex}{\Ey}{$\!E$}{-13pt}
\myput{br}{\Fx}{\Fy}{$F\,\,\,$}{6pt}
\myput{t}{\Gx}{\Gy}{$G\,\,\,\,$}{-15pt}
% bases for lattices
\def\mymatrix#1#2#3{\tiny\smaller\begin{tabular}{@{}c@{}}#1\\#2\\#3\end{tabular}}
\myput{bl}{\Ax}{\Ay}{\mymatrix{200}{010}{001}}{12pt}
\myput{tl}{\Bx}{\By}{\,\mymatrix{200}{021}{001}}{-18pt}
\myput{r}{\Cx}{\Cy}{\mymatrix{210}{011}{001}\kern6pt}{0pt}
\myput{bl}{\Dx}{\Dy}{\mymatrix{100}{010}{001}}{12pt}
\myput{r}{\Ex}{\Ey}{\mymatrix{201}{021}{001}\,\,\,}{0pt}
\myput{tl}{\Fx}{\Fy}{\mymatrix{100}{021}{001}}{-18pt}
\myput{bl}{\Gx}{\Gy}{\mymatrix{100}{020}{002}}{12pt}
% labels in terms of the link of D
\def\rowvector#1#2#3{\tiny\smaller{}[#1#2#3]}
\def\colvector#1#2#3{$\Bigl[\raise1pt\hbox{\mymatrix{#1}{#2}{#3}}\Bigr]$}
\myput{tr}{\Ax}{\Ay}{\rowvector{1}{0}{0}\kern5pt}{-5pt}
\myput{tr}{\Bx}{\By}{\colvector{0}{1}{1}\kern-3pt}{-28pt}
\myput{l}{\Cx}{\Cy}{\kern5pt\rowvector{1}{1}{1}}{0pt}
\myput{l}{\Ex}{\Ey}{\kern10pt\colvector{1}{1}{1}}{0pt}
\myput{l}{\Fx}{\Fy}{\kern10pt\rowvector{0}{1}{1}}{6pt}
\myput{tl}{\Gx}{\Gy}{\colvector{1}{0}{0}}{-20pt}
\end{pspicture}
\caption{The subcomplex $X$ of $\B$ used in the proof of
  theorem~\ref{thm-PGamma-L}; $\beta_L$ acts by rotating about the
  centerline of the strip and sliding everything to the left.}
\label{fig-subcomplex-of-building}
\end{figure}

\begin{proof}
The main step is to show that $\generate{L_3(2),\beta_L}$ acts
with${}\leq2$ orbits on vertices.  Consider the subcomplex $X$ of $\B$
pictured in figure~\ref{fig-subcomplex-of-building}.  (It is the
fixed-point set of the dihedral group $D_8\sset L_3(2)$ generated by
the negations of evenly many coordinates, together with the
simultaneous negation of the first coordinate and exchange of the
second and third.)  We have named seven vertices $A,\dots,G$ and given
bases for the $\Z_2$-lattices in $\Q_2^3$ they represent (the columns
of the $3\times3$ arrays).  Obviously $D$ corresponds to $\Z_2^3$, and
one can check that $C$ represents $L\tensor_\curlyO\Z_2$.  For
$A,B,C,E,F,G$ we have also indicated their positions in the link of
$D$ by giving the corresponding subspace of
$\Z_2^3/2\Z_2^3\iso\F_2^3$.  A column vector represents its span and a
row vector represents its kernel (thinking of it as a a linear function).

One can check that $\beta_L$ acts on $X$ by rotating everything around
the centerline of the
main strip and shifting to the left by half a notch.  In
particular, the vertices along the edges of the strip are all
$\generate{\beta_L}$-equivalent, as are the tips of the triangular
flaps.  We claim that every vertex of
$\B$ is $\generate{L_3(2),\beta_L}$-equivalent to $C$ or $D$.  It
suffices by induction to verify this for every vertex adjacent to $C$ or $D$.  Any
neighbor of $C$ is $L_3(2)$-equivalent to $B$ or $D$, and we
just saw that $\beta_L(D)=B$.  The stabilizer of
$D$ in $L_3(2)$ is the subgroup $S_4$ preserving the standard frame in
$L$.  It acts on $\F_2^3$ by the $S_3$ of coordinate permutations.  The $14$ neighbors of $D$ correspond
to the nonzero row and column vectors over $\F_2$.  Under the $S_4$ symmetry,
every neighbor is equivalent to $A$, $B$, $C$, $E$,
$F$ or $G$.  Since $\beta_L(E)=C$ and $A$, $B$, $F$ and $G$ are
$\generate{\beta_L}$-equivalent to $D$, we have proven our claim.

Since $\generate{L_3(2),\beta_L}$ has covolume${}\leq1/168+1/24=1/21$ and
$\PGAK$ has covolume${}\geq1/21$
(lemma~\ref{lem-second-bounds-on-lattices-in-PGL3Q2}), these two
groups coincide.  Lemma~\ref{lem-second-bounds-on-lattices-in-PGL3Q2}
also implies that the stabilizers of $C$ and $D$ are no larger than
the visible $L_3(2)$ and $S_4$.
\end{proof}

A key relation between $\GAK$ and $\GMu$ is the following.  It is
independent of the other theorems in this section.  In fact, together
with either of theorems~\ref{thm-PGamma-M} and~\ref{thm-PGamma-L} it
implies the other, except for the explicit generating sets.

\begin{theorem}
\label{thm-intersection-has-index-8-in-each}
$\GAK\cap\GMu$ has index $8$ in each of $\GAK$ and $\GMu$.
\end{theorem}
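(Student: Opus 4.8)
The plan is to compute each index as the size of an orbit. An isometry of $V$ that preserves $\Mhalf$ is the same thing as an element of $\GMu$, and likewise for $\Lhalf$ and $\GAK$. Hence, writing $N=\GAK\cap\GMu$, the group $N$ is simultaneously the $\GAK$-stabilizer of $\Mhalf$ and the $\GMu$-stabilizer of $\Lhalf$. So $[\GAK:N]$ equals the size of the $\GAK$-orbit of $\Mhalf$ and $[\GMu:N]$ equals the size of the $\GMu$-orbit of $\Lhalf$, and it suffices to show that each orbit has $8$ elements.

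First I would treat $\GAK$. Since $L$ is unimodular, $\hip{}{}$ reduces to a nondegenerate form on $\Lhalf/\theta\Lhalf\iso\F_7^3$, and the index-$7$ sublattices of $\Lhalf$ isometric to $\Mhalf$ correspond to the isotropic lines (each such sublattice is the preimage of a hyperplane $(\shat')^\perp$ with $\shat'$ isotropic, just as $\Mhalf$ comes from $\shat$). As noted before lemma~\ref{lem-basic-properties-of-M} there are $48$ nonzero isotropic vectors, hence $8$ isotropic lines, and $\Isom L$ is transitive on them because it is transitive on the $336$ norm~$7$ vectors (lemma~\ref{lem-norm-3-and-7-vectors-in-L}). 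As $\Isom L\sset\GAK$ and $\Mhalf$ is one of these $8$ sublattices, the $\GAK$-orbit of $\Mhalf$ is all $8$, so $[\GAK:N]=8$.

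For $\GMu$ I would use lemma~\ref{lem-superlattices-of-M}: $\Mhalf$ has exactly $8$ unimodular Hermitian superlattices, one of which is $\Lhalf$, and these are the preimages of the $8$ isotropic lines of the rank~$1$ form on $\Mhalf/\theta\Mhalf$. Since $\Isom M\sset\Isom L$ (lemma~\ref{lem-superlattices-of-M}), the finite group $\Isom M\iso2\times F_{21}$, viewed inside $\GMu$, fixes $\Lhalf$; its order~$7$ element then fixes $\Lhalf$ and, $7$ being prime, is a single $7$-cycle on the remaining seven superlattices. Consequently the $\GMu$-orbit of $\Lhalf$ fills out all $8$ as soon as one exhibits a single element of $\GMu$ carrying $\Lhalf$ to a different superlattice, and the natural candidate is $\beta_M$.

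The crux, and the step I expect to be the main obstacle, is to verify $\beta_M\Lhalf\neq\Lhalf$, i.e.\ $\beta_M\notin\GAK$. Modulo $\theta$ the correcting scalar $\lambda/\lambdabar$ becomes $1$ (because $\lambda\cong\lambdabar\bmod\theta$), so $\beta_M$ acts on $\Mhalf/\theta\Mhalf$ by the cyclic permutation of the basis $\bar e_1,\bar e_2,\bar e_3$ of lemma~\ref{lem-basic-properties-of-M}. Let $S$ be the isotropic line attached to $\Lhalf$, characterized as the image of $\theta\Lhalf$ in $\Mhalf/\theta\Mhalf$, equivalently as the unique isotropic line carrying no reduction of a norm~$7$ vector of $M$. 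It then remains to check that this cyclic permutation does not fix $S$; equivalently, to exhibit a vector of $\Lhalf$ --- for instance $\tfrac1\theta$ times a generator of $S$ --- whose $\beta_M$-image acquires $\theta$ in a denominator and so leaves $\Lhalf=\curlyOhalf^3$. This is a direct computation in the $e_i$-basis. Granting it, $\beta_M\Lhalf$ is a superlattice of $\Mhalf$ other than $\Lhalf$, and together with the $7$-cycle above the $\GMu$-orbit of $\Lhalf$ is all $8$ superlattices, giving $[\GMu:N]=8$ and hence the theorem.
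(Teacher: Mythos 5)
Your $\GAK$ half is the paper's own argument in light disguise: both compute $[\GAK:\GAK\cap\GMu]$ as the size of the $\GAK$-orbit of $\Mhalf$ among the eight index-$7$ sublattices of $\Lhalf$ cut out by isotropic hyperplanes of $L/\theta L$, with transitivity coming from transitivity on norm-$7$ vectors; this part is fine (for the orbit count you do not even need your asserted claim that only isotropic hyperplanes yield copies of $\Mhalf$, since equivariance of reduction mod~$\theta$ already confines the orbit to the eight isotropic ones). The $\GMu$ half is where you depart from the paper, and it contains a genuine gap: ``$7$ being prime'' shows only that the order-$7$ element $\tau$ of $\Isom M$ acts on the seven superlattices other than $\Lhalf$ either as a $7$-cycle or \emph{trivially}, and you never rule out the trivial case; if $\tau$ fixed all eight superlattices, exhibiting $\beta_M\Lhalf\neq\Lhalf$ would add only one point to the orbit, not seven. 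The trivial case can be excluded, but it takes an argument: fixing all eight isotropic lines would force $\tau$ to act as a scalar on the radical plane of $M/\theta M$, hence trivially there ($\tau$ has order $7$ while scalars in $\F_7^*$ have order dividing $6$), so its fixed space would be at least $2$-dimensional; but $L=M^*$ (both contain $M$ with index $7=\det M$), so $\hip{}{}$ induces a perfect pairing of $M/\theta M$ with $L/\theta L$, whence $\tau$ has fixed space of the same dimension on both, and on $L/\theta L$ an order-$7$ isometry of the nondegenerate form is a regular unipotent with $1$-dimensional fixed space. Separately, the step you yourself call the crux, $\beta_M\notin\GAK$, is left at ``granting it'': the computation you sketch is well posed and does succeed (mod~$\theta$ your cyclic permutation acts on the radical plane with the two distinct eigenvalues $2$ and $4$, so it fixes exactly two of the eight lines, and one checks $S$ is neither), but as written the proof is incomplete at its decisive point; and since the theorem is stated to be independent of theorems~\ref{thm-PGamma-M} and~\ref{thm-PGamma-L}, you cannot shortcut this by comparing orbit structures of $\PGMu$ and $\PGAK$.

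The paper's proof of $\GMu$-transitivity sidesteps both difficulties with a bootstrap needing no $\beta_M$ and no computation: each of the seven other superlattices $\Lhalf'$ is a copy of $\curlyOhalf^3=\Lhalf$ (lemma~\ref{lem-superlattices-of-M}), so some ambient isometry $\phi$ carries $\Lhalf'$ to $\Lhalf$; then $\phi(\Mhalf)$ is one of the eight copies of $\Mhalf$ inside $\Lhalf$, and by the already-established $\GAK$-transitivity on those copies there is $\psi\in\GAK$ with $\psi(\phi(\Mhalf))=\Mhalf$; now $\psi\circ\phi$ lies in $\GMu$ and carries $\Lhalf'$ to $\Lhalf$. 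If you wish to keep your route, patch the $7$-cycle claim as above and actually execute the mod-$\theta$ identification of $S$ and its image under the cyclic permutation; otherwise the paper's composition trick is strictly easier.
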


\begin{proof}
  This boils down to two claims: $\Lhalf$ contains exactly $8$ copies
  of $\Mhalf$, on which $\GAK$ acts transitively, and $\Mhalf$ lies in
  exactly $8$ copies of $\Lhalf$, on which $\GMu$ acts transitively.

  For the first claim, any sublattice of $\Lhalf$ isometric to
  $\Mhalf$ must have index~$7$ since $\det M=7$ and $L$ is unimodular.
  The index~$7$ sublattices correspond to the hyperplanes in
  $\Lhalf/\theta\Lhalf=L/\theta L$, which we studied in
  section~\ref{sec-L-and-M}.  The orthogonal complements of the plus
  and minus points of $\P(L/\theta L)$ give $\curlyO$-sublattices $M'$ with
  $M'[\frac{1}{2}]$ not isomorphic to $\Mhalf$.  (The reduction of
  $\hip{}{}$ to $M'/\theta M'$ has rank~$2$ not~$1$.)  Since
  $L_3(2)\sset\GAK$ acts transitively on the $8$ isotropic points of
  $\P(L/\theta L)$, the claim is proven.

  The second claim is similar.  By
  lemma~\ref{lem-basic-properties-of-M}, the unimodular lattices
  containing $\Mhalf$ are $\Lhalf$ and $7$ copies of $\curlyOhalf^3$.
  Happily, from the definition of $L$ in section~\ref{sec-L-and-M} it is
  obvious that $\Lhalf=\curlyOhalf^3$.  To prove transitivity, suppose
  $\Lhalf'$ is one of $7$, and choose any isometry carrying it to
  $\Lhalf$.  This sends $\Mhalf$ to one of $8$ sublattices of
  $\Lhalf$.  Following this by an isometry of $\Lhalf$ sending this
  image of $\Mhalf$ back to $\Mhalf$, we see that $\Lhalf'$ is
  $\GMu$-equivalent to $\Lhalf$.
\end{proof}

%% \begin {remark}
%% The constructions of $L$ and $M$ yield maps $\PGAK,\PGMu\to L_3(2)$,
%% and one naturally hopes that these are somehow ``the same''.  The
%% first map is to the projective isometry group of $L/\theta L$, which
%% is $\PGL_2(7)$.  The second is the map to the projective linear group
%% of the $2$-dimensional null-space of $\hip{}{}$ on $M/\theta M$, which
%% is also $\PGL_2(7)$.   One can work out the images of the
%% generators of $\PGAK$ and $\PGMu$ to deduce that the images these maps
%% are $\PGL_2(7)$ and $\PSL_2(7)$ respectively.

%% But this hope is in vain, because it would yield a finite-index
%% subgroup $N$ of $\PGAK\cap\PGMu$ that is normal in both $\PGAK$ and
%% $\PGMu$.  It is elementary that the normalizer of a finitely generated
%% discrete subgroup of any topological group, modulo the centralizer, is
%% discrete.  Therefore the normalizer of $N$ in $\PGL_3(\Q_2)$ would be
%% discrete and contain both $\PGAK$ and $\PGMu$.  This is impossible by
%% the maximality of theorem~\ref{thm-basic-properties-of-PGamma-L-and-PGamma-M}.
%% \end{remark}

\section{Uniqueness}
\label{sec-uniqueness}

In this section we show that $\PGAK$ and $\PGMu$ are the only two densest
lattices in $\PGL_3(\Q_2)$.  We will use the fact that $\PGL_3(\Q_2)$
contains only one conjugacy classes of subgroups isomorphic to
$F_{21}$ resp.\ $L_3(2)$.     One way to see this is
that the finite group has only two faithful characters of
degree $3$, which are exchanged by an outer automorphism.

\begin{theorem}
\label{thm-uniqueness}
Suppose $\PG$ is a lattice in $\PGL_3(\Q_2)$ with smallest possible
covolume.  Then its covolume is $1/21$ and it is conjugate to
$\PGAK$ or $\PGMu$.
\end{theorem}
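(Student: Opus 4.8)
The plan is to feed the dichotomy of lemma~\ref{lem-second-bounds-on-lattices-in-PGL3Q2} into the explicit lattices of theorems~\ref{thm-PGamma-M} and~\ref{thm-PGamma-L}. Since $\PGMu$ has covolume $1/21$, the minimum covolume is at most $1/21$, so lemma~\ref{lem-second-bounds-on-lattices-in-PGL3Q2} applies to $\PG$: its covolume is exactly $1/21$, and either (A) it acts transitively on vertices with stabilizer $\iso F_{21}$, or (B) it acts with two orbits, with stabilizers $\iso L_3(2)$ and $S_4$. I would show that (A) forces $\PG$ to be conjugate to $\PGMu$ and that (B) forces it to be conjugate to $\PGAK$; theorems~\ref{thm-PGamma-M} and~\ref{thm-PGamma-L} guarantee that each case actually occurs.

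Consider case (A). Because $\PGL_3(\Q_2)$ has a single conjugacy class of subgroups $\iso F_{21}$, after conjugating I may assume the $\PG$-stabilizer of some vertex $v_0$ is the very subgroup $F_{21}$ of $\PGMu$ fixing the vertex represented by $M\tensor_\curlyO\Z_2$. Two local facts drive the argument. First, the $F_{21}$-stabilizer of a point or line of $\P(M/\lambda M)$ has order $21/7=3$, so $F_{21}$ fixes no neighbor of $v_0$; since fixed-point sets in the CAT(0) building are convex, $v_0$ is the \emph{unique} $F_{21}$-fixed vertex, and in particular $\PG$ and $\PGMu$ share the same $v_0$. Second, $F_{21}$ has order coprime to $2$, so it acts faithfully on the link $\P^2\F_2$, and being coprime also to the order $8$ of a flag-stabilizer in $\mathrm{Aut}(\P^2\F_2)\iso L_3(2)$, it permutes the $21$ flags --- equivalently the $21$ chambers containing $v_0$ --- simply transitively. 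Now vertex-transitivity of $\PG$ supplies an element carrying $v_0$ onto the vertex $v_1$ of a chosen chamber $\sigma_0=\{v_0,v_1,v_2\}$ and fixing $\sigma_0$ setwise; since an element of $\PGL_3(\Q_2)$ permutes the $\Z/3$-valued types of $\sigma_0$'s vertices only by a translation (never by a transposition, which belongs to the point--line duality), this element must be an order-$3$ rotation $g$ of $\sigma_0$, exactly like $\beta_M$. Simple transitivity on chambers makes the pointwise stabilizer of $\sigma_0$ trivial, so $g^3=1$, the edge- and face-groups are the same copies of $\Z/3$ as for $\PGMu$, and $g$ fuses the two $F_{21}$-orbits of edges at $v_0$; hence $\PG$ is chamber-transitive with quotient \emph{the same triangle of groups} as $\PGMu$.

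It remains to upgrade this coincidence of quotient data to a genuine conjugacy. Since $\B$ is simply connected, it is the universal development of that triangle of groups, so $\PG$ and $\PGMu$, having isomorphic quotient complexes of groups and the same development, are conjugate inside $\mathrm{Aut}(\B)$. The group $\mathrm{Aut}(\B)$ exceeds $\PGL_3(\Q_2)$ only by the order-$2$ point--line duality $\delta$; to descend I would check that $\delta\PGMu\delta^{-1}$ is again $\PGL_3(\Q_2)$-conjugate to $\PGMu$ (it is vertex-transitive with an $F_{21}$-stabilizer, and there is only one conjugacy class of $F_{21}$), which makes the conjugacy take place already inside $\PGL_3(\Q_2)$.

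Case (B) follows the same template with $L_3(2)$ in place of $F_{21}$. I conjugate so that the $L_3(2)$-orbit vertex $v_0$ is the vertex $C$ of figure~\ref{fig-subcomplex-of-building}, with $\PG$-stabilizer the image in $\PGL_3(\Q_2)$ of $\Isom L\iso L_3(2)\times\{\pm1\}$ (corollary~\ref{cor-Aut-L-is-L32}). By lemma~\ref{lem-L32-actions-on-neighbors} every neighbor of $v_0$ has stabilizer $\iso S_4$, so all $14$ neighbors lie in the single remaining $\PG$-orbit; thus $\PG$ fuses the point- and line-orbits of neighbors just as $\beta_L$ does, and the resulting quotient complex of groups matches that of $\PGAK$. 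The same simple-connectivity and duality arguments then identify $\PG$ with a $\PGL_3(\Q_2)$-conjugate of $\PGAK$. The hard part throughout is this final rigidity step: proving that matching the vertex stabilizer and the combinatorial quotient really forces geometric conjugacy --- that no hidden ``twisting'' by the pointwise chamber (Iwahori) stabilizer survives discreteness with covolume $1/21$ --- together with the descent across $\delta$ from $\mathrm{Aut}(\B)$ to $\PGL_3(\Q_2)$. By contrast, matching the finite stabilizer via the single conjugacy class is the easy part.
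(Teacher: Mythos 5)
You correctly reduce to the dichotomy of lemma~\ref{lem-second-bounds-on-lattices-in-PGL3Q2}, and in case (A) your normalization via the single conjugacy class of $F_{21}$'s and your construction of an order-$3$ chamber rotation $g$ with $\PG=\generate{F_{21},g}$ reproduce the paper's setup (its $\a$, whose cube is a scalar). But the decisive step is missing --- and you flag it yourself: showing that ``no hidden twisting by the pointwise chamber (Iwahori) stabilizer survives'' is not a loose end, it is essentially the entire content of case (A). Your substitute, that isomorphic quotient complexes of groups with development $\B$ force conjugacy in ${\rm Aut}(\B)$, does not apply as stated: you have only matched the abstract local groups ($F_{21}$, $\Z/3$, $\Z/3$) and the combinatorial quotient, not the complexes of groups themselves, whose injections and twisting data are part of the structure. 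Concretely, any two order-$3$ rotations of the fixed chamber differ by an element of its pointwise stabilizer in $\PGL_3(\Q_2)$, an \emph{infinite} compact group, so there is a large a priori family of candidate $g$'s and no formal Bass--Serre/Haefliger principle rules them out. The paper kills this ambiguity by computation: it observes that $\gamma=\a\tau^3$ carries the edge $\overline{lv}$ to $\overline{vp}$, both with $\PG$-stabilizer $\generate{\sigma}$, hence $\gamma$ normalizes $\generate{\sigma}$; it then computes the normalizer explicitly (scalars times $\pi_1+a\pi_2+b\sigma_2$, possibly composed with an inverting element $\gamma_M$), imposes $\a^3=(\hbox{scalar})$, and eliminates every solution except $\a=\a_M$ by polynomial algebra carried out in PARI/GP. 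Nothing in your proposal replaces this. Your descent from ${\rm Aut}(\B)$ to $\PGL_3(\Q_2)$ is also circular: to conclude that $\delta\PGMu\delta^{-1}$ is $\PGL_3(\Q_2)$-conjugate to $\PGMu$ you invoke ``vertex-transitive with $F_{21}$-stabilizer and one class of $F_{21}$'' --- but the assertion that any two such lattices are conjugate is exactly the case-(A) statement you are in the middle of proving.

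In case (B) you defer to the same unproven rigidity principle, whereas here there is a short argument you missed, and it is the one the paper uses: after normalizing so that $\PG$ contains the $L_3(2)$ fixing $C$, any $\beta\in\PG$ carrying $D$ to $B$ satisfies that $\beta\circ\beta_L^{-1}$ normalizes the $S_4$ fixing $B$; this $S_4$ is \emph{self-normalizing} in $\PGL_3(\Q_2)$ (no outer automorphisms, trivial centralizer by irreducibility), so $\beta\in S_4\cdot\beta_L$, hence $\beta_L\in\PG$ and $\PG\supseteq\generate{L_3(2),\beta_L}=\PGAK$, with equality by the covolume bound. This also explains the asymmetry your uniform template obscures: case (B) is easy precisely because $S_4$ is self-normalizing, while in case (A) the edge group $\generate{\sigma}\iso\Z/3$ has infinite normalizer in $\PGL_3(\Q_2)$, which is why a genuine elimination computation, not a complexes-of-groups formalism, is required.
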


\begin{proof}
The covolume claim is proven in
lemma~\ref{lem-second-bounds-on-lattices-in-PGL3Q2}, which also shows
that $\PG$ either acts transitively on vertices of $\B$ with
stabilizer isomorphic to $F_{21}$, or has two orbits, with stabilizers
isomorphic to $L_3(2)$ and $S_4$.

We begin by assuming the latter case and proving $\PG$ conjugate to
$\PGAK$.  As in the proof of theorem~\ref{thm-PGamma-L}, let $C$ be
the vertex in $\B$ corresponding to $L$ and consider its neighbors $B$
and $D$ and the transformation $\beta_L\in\GAK$ sending $D$ to $B$.
Because $\PGL_3(\Q_2)$ contains a unique conjugacy class of
$L_3(2)$'s, we may suppose that $\PG$ contains the $\PGAK$-stabilizer
$L_3(2)$ of $C$.  By lemma~\ref{lem-L32-actions-on-neighbors}, the
$\PG$-stabilizers of $B$ and $D$ are subgroups $S_4$ of this $L_3(2)$.
By the assumed transitivity of $\PG$ on vertices with stabilizer
$S_4$, it contains some $\beta\in\PGL_3(\Q_2)$ sending $D$ to $B$.  So
$\beta\circ\beta_L^{-1}$ normalizes the $S_4$ fixing $B$.  Now, this $S_4$
is self-normalizing in $\PGL_3(\Q_2)$ since $S_4$ has no outer
automorphisms and the centralizer is trivial (by irreducibility).  So
$\beta$ and $\beta_L$ differ by an element of this $S_4$.  In particular,
$\PG$ contains $\beta_L$.  By theorem~\ref{thm-PGamma-L} we have
$\PGAK=\generate{L_3(2),\beta_L}$, so we have shown that $\PG$ contains
$\PGAK$.  By the maximality of the latter group, we have equality.

The case of $\PG$ transitive on vertices with stabilizer $F_{21}$ is
similar in spirit, but messier because the inclusions $S_4\sset
L_3(2)$ are replaced by $\Z/3\sset F_{21}$, and $\Z/3$ is very far
from self-normalizing in $\PGL_3(\Q_2)$.  Before embarking on the
details, we observe that each triangle in $\B$ has $\PG$-stabilizer
$\Z/3$, acting on it by cyclically permuting its vertices.  This is a
restatement of the simple transitivity of $\PG$ on pairs (vertex of
$\B$, triangle containing it), which can be checked by considering the
action of $F_{21}$ on the link of its fixed vertex.

Write $v$ for the vertex of $\B$ corresponding to
$M_2:=M\tensor\Z_2$, preserved by the group $F_{21}\times2$ of $\Z$-points of $\GMu$.  
We will need Mumford's explicit
generators
$$
\sigma=\begin{pmatrix}1&0&\lambda\\0&0&-1\\0&1&1\end{pmatrix}
\quad\hbox{and}\quad
\tau=\begin{pmatrix}0&0&1\\1&0&1+\lambda\\0&1&\lambda\end{pmatrix}
$$
of orders $3$ and~$7$ respectively.  These matrices come from
Mumford's description of $M$ as a Hermitian $\curlyO$-lattice in 
the  $\Q(\lambda)$-vector space $\Q(\zeta_7)$, with basis
$1,\zeta_7,\zeta_7^2$.  Namely, $\tau$ is multiplication by $\zeta_7$
and $\sigma$ is the Galois automorphism $\zeta_7\to\zeta_7^2$.  As a
$3$-element of $L_3(2)$, $\sigma$ preserves a unique point and
a unique line of $\P(M_2/2M_2)$.  We
write $p$ and $l$ for the corresponding neighbors of $v$.  By working
out the action of $\tau$ on $\P(M_2/2M_2)$, one can check that
$\tau^3(l)$ is a neighbor of $p$.  

Since $\PGL_3(\Q_2)$ contains a unique conjugacy class of $F_{21}$'s,
we may suppose $\PG$ contains the $\PGMu$-stabilizer of $v$.  Let
$\G_0\sset\GL_3(\Q_2)$ be generated by $\sigma$, $\tau$ and an
$\a\in\GL_3(\Q_2)$ lying over the element of $\PG$ that  rotates the triangle with vertices
$v,p,\tau^3(l)$ as shown:
\begin{equation}
\psset{unit=1.5cm,linewidth=.03}
\def\noderadius{.07}
\begin{pspicture}[shift=-.55](-1.3,-.1)(1,1)
\qdisk(-1,0){\noderadius}
\qdisk(0,0){\noderadius}
\qdisk(1,0){\noderadius}
\qdisk(.5,.866){\noderadius}
\qline(-1,0)(1,0)
\qline(0,0)(.5,.866)
\qline(1,0)(.5,.866)
\rput[r](-1.2,0){$l$}
\rput[br](0,.1){$v$}
\rput[l](1.2,0){$p$}
\rput[l](.7,.866){$\tau^3(l)$}
% arrows in triangle
\psset{linewidth=.02}
\def\arcradius{.22}
\rput[c](.5,.289){$\a$}
\psarc{->}(.5,.289){\arcradius}{90}{205}
\psarc{->}(.5,.289){\arcradius}{210}{325}
\psarc{->}(.5,.289){\arcradius}{330}{85}
\end{pspicture}
\label{eq-diagram}
\end{equation}
By our remarks above on the $\PG$-stabilizer of a $2$-simplex, $\a$ is
uniquely defined (up to scalars) and its cube is a scalar.
We will replace $\a$ by its product with a scalar whenever convenient.
Note that $\c:=\a\tau^3$ sends the edge $\overline{lv}$ to
$\overline{vp}$.  Since these two edges have the same $\PG$-stabilizer
$\generate{\sigma}$, $\c$ normalizes $\generate{\sigma}$.  We will consider the
case that $\c$ centralizes $\sigma$ and then the case that it inverts
$\sigma$. 

The centralizer of $\sigma$ in $\GL_3(\Q_2)$ is the product of the
scalars and
$$
\set{\pi_1+a\pi_2+b\sigma_2}{\hbox{$a,b\in\Q_2$, not both $0$}}
\sset\GL_3(\Q_2)
$$
where $\pi_1=\frac{1}{3}(1+\sigma+\sigma^2)$ is the projection to
$\sigma$'s fixed space, $\pi_2=I_3-\pi_1$ is the projection to the
span of $\sigma$'s other eigenspaces, and
$\sigma_2=\sigma\circ\pi_2$.  This is because the image of $\pi_2$ is
irreducible as a $\Q_2[\generate{\sigma}]$-module, making it into a
$1$-dimensional vector space over $\Q_2(\zeta_3)$.  Expressing
$\a$ in terms of $\c$, we get $\a=(\pi_1+a\pi_2+b\sigma_2)\tau^{-3}$,
and the equation $\a^3=(\hbox{scalar})$ imposes conditions on $a$
and~$b$, namely the vanishing of 8 polynomials in $\Q(\lambda)[a,b]$.  These
are unwieldy enough that we used the PARI/GP software \cite{PARI} to
handle the algebra.  One uses Gaussian elimination on the $a^3$,
$a^2b$, $a^2$, $ab^2$ and $ab$ coefficients, obtaining a relation
$a=$(polynomial in $b$).  After eliminating $a$ in favor
of $b$, one of the relations becomes $b^3-b=0$, so $b\in\{0,\pm1\}$.
Of these, $b=0$ does not satisfy the other relations, while
$b=\pm1$ do.  But
these give the trivial solutions $\c=\sigma^{\pm1}$, and the
corresponding $\a=\sigma^{\pm1}\tau^{-3}$ fix $v$ rather than acting
as in \eqref{eq-diagram}.  So we have eliminated the case that $\c$ centralizes
$\sigma$.

To treat the case of $\c$ inverting $\sigma$, we note that
$\a_M:=\sigma\tau^5\beta_M^{-1}\tau^{-5}\sigma^{-1}$ is an element of
$\GMu$ acting as in \eqref{eq-diagram}, and that the corresponding
$\c_M:=\a_M\tau^3$ inverts $\sigma$.  Therefore $\c$ has the form
$\c_M\circ(\pi_1+a\pi_2+b\sigma_2)$.  As before, the equation
$\a^3=\hbox{(scalar)}$ imposes conditions on $a$ and~$b$.  Gaussian
elimination yields a relation $f(b)a+g(b)=0$ with $f$ and $g$
polynomials and $f$ of degree~$1$.  After checking that $f(b)\neq0$
(it turns out that $f(b)=0$ implies $g(b)\neq0$), one solves for $a$
in terms of $b$.  Then eliminating $a$ gives a family of polynomials
in $b$, all of which must vanish.  It happens that all are divisible
by $b$, so $b=0$ is a solution.  This leads to $a=1$, hence $\a=\a_M$
and $\generate{\sigma,\tau,\beta_M}\sset\G_0$.  By
theorem~\ref{thm-PGamma-M}, the projectivization of the left side is $\PGMu$.
Since $\PG_0\sset\PG\sset\PGMu\sset\PG_0$, all three groups coincide.

If $b\neq0$ then we divide the polynomials by as many powers of $b$ as
possible and take their gcd in $\Q(\lambda)[b]$, which turns out to
have degree~$1$.  Solving for $b$, one obtains $\a$.  It turns out
that this $\a$ is a scalar times a matrix with entries in $\curlyO$
and odd determinant.  That is, it represents an element of
$\PGL(M_2)$.  Therefore it fixes $v$ rather than sending it to $p$, so
this solution for $b$ is spurious.  This completes the proof.
\end{proof}

\end{document}